\documentclass[11pt,oneside,reqno,a4paper]{amsart}
\usepackage[T1]{fontenc}
\usepackage[utf8]{inputenc}
\usepackage[english]{babel}
\usepackage{setspace}
\usepackage[margin=1in,headsep=0.4in,footskip=0.2in]{geometry}
\usepackage{graphicx}
\usepackage{bm}
\usepackage[dvipsnames]{xcolor}
\usepackage{amsmath,amssymb,amsthm}
\usepackage{enumitem}
\usepackage{amsfonts}
\usepackage{bbm}

\usepackage{siunitx}
\sisetup{per-mode = symbol, exponent-product = \cdot}
 
\usepackage{hyperref}
\hypersetup{
 colorlinks,
 citecolor=RoyalBlue,
 linkcolor=RoyalBlue,
 urlcolor=RoyalBlue}
\delimitershortfall=10pt
\delimiterfactor=850
\usepackage{esint}
\usepackage{lmodern}
\raggedbottom
\usepackage{bbm}

\usepackage[
  rm={oldstyle=false,proportional=true},
  sf={oldstyle=true,proportional=true},
  tt={oldstyle=true,proportional=true,variable=true},
  qt=false,
]{cfr-lm}
\usepackage{cite,enumerate,setspace}
\setlength{\oddsidemargin}{-0.1in}
\setlength{\textwidth}{16.4cm}
\setlength{\topmargin}{0in}
\setlength{\textheight}{8.9in}
\setlength{\parskip}{0.15cm}

\newcommand{\tmabbr}[1]{#1}
\newcommand{\assign}{:=}

\newcommand{\mathd}{\mathrm{d}}

\newcommand{\of}{:}

\newcommand{\tmem}[1]{{\em #1}}
\newcommand{\tmmathbf}[1]{\ensuremath{\boldsymbol{#1}}}

\newcommand{\tmop}[1]{\ensuremath{\operatorname{#1}}}

\theoremstyle{definition}
\newtheorem{definition}{Definition}[section]
\newtheorem{remark}{Remark}[section]
\newcommand{\dual}[2]{\langle \mathrm{#1},\mathrm{#2}\rangle}

\newcommand{\duall}[2]{\langle {#1},{#2}\rangle}

\newcommand{\curl}{\ensuremath{\mathbf{{curl\,}}}}
\newcommand{\curle}{\ensuremath{\mathbf{curl}_\varepsilon\,}}

\newcommand{\desf}[1]{\emph{#1}}
\newcommand{\chcM}{\tmmathbf{\Phi}_{\mathcal{M},{\varepsilon}}}
\newcommand{\chcMmone}{{\tmmathbf{\Phi}_{\mathcal{M},\varepsilon}^{- 1}}}
\newcommand{\Keps}{\tmmathbf{h}_{\varepsilon}}
\newcommand{\held}{{\mathfrak{d}}}
\newcommand{\heffe}{{\tmmathbf{h}_{\mathsf{eff}}^{\varepsilon}}}
\newcommand{\heffzero}{{\tmmathbf{h}_{\mathsf{eff}}^0}}
\newcommand{\heff}{{\tmmathbf{h}_{\mathsf{eff}}}}
\newcommand{\hd}{{\tmmathbf{h}_{\mathsf{d}}}}
\newcommand{\chc}{{\tmmathbf{\Phi}}}
\newcommand{\ds}{{\partial_s}}
\newcommand{\m}{\tmmathbf{m}}
\newcommand{\f}{{\tmmathbf{f}}}
\newcommand{\tmu}{\tmmathbf{u}}

\newcommand{\ue}{\tmmathbf{u}_{\varepsilon}}
\newcommand{\ues}{\tmmathbf{u}_{\varepsilon}^{\ast}}

\newcommand{\grad}{{\nabla}}
\newcommand{\Fsp}{\tmmathbf{{\of}}}
\newcommand{\M}{\tmmathbf{M}}
\newcommand{\Stwo}{\mathbb{S}}
\newcommand{\RR}{\mathbbmss{R}}
\newcommand{\NN}{\mathbbmss{N}}
\newcommand{\eqs}{=}

\newcommand{\ppsi}{\boldsymbol{\psi}}
\newcommand{\pphi}{\boldsymbol{\phi}}
\renewcommand{\v}{\boldsymbol{v}}
\newtheorem{algorithm}{Algorithm}


\numberwithin{equation}{section}
\theoremstyle{plain}
\newtheorem{theorem}{Theorem}[section]
\newtheorem{proposition}[theorem]{Proposition}

\makeatletter
\def\@seccntformat#1{\hspace*{0mm}%
 \protect\textup{\protect\@secnumfont
   \ifnum\pdfstrcmp{subsection}{#1}=0 \bfseries\fi
   \csname the#1\endcsname
   \protect\@secnumpunct
     }%
}

\makeatletter
\g@addto@macro \normalsize {%
 \setlength\abovedisplayskip{10pt plus 2pt minus 2pt}%
 \setlength\belowdisplayskip{10pt plus 2pt minus 2pt}%
}
\makeatother

\flushbottom

\begin{document}

\title{Micromagnetics of thin films in the presence of Dzyaloshinskii--Moriya interaction}
\author{Elisa~Davoli, Giovanni~Di~Fratta, Dirk~Praetorius, Michele~Ruggeri}
\address{TU Wien, Institute of Analysis and Scientific Computing,
Wiedner Hauptstra{\ss}e 8--10, 1040, Vienna, Austria}
\email{elisa.davoli@asc.tuwien.ac.at}
\email{giovanni.difratta@asc.tuwien.ac.at}
\email{dirk.praetorius@asc.tuwien.ac.at}
\email{michele.ruggeri@asc.tuwien.ac.at}
\keywords{Micromagnetics, Landau--Lifshitz--Gilbert Equation, Magnetic thin films, Dzyaloshinskii--Moriya interaction, $\Gamma$-Convergence, Finite elements}
\subjclass[2010]{49S05, 35C20, 35Q51, 82D40, 65M12, 65M60}

\begin{abstract}
In this paper, we study the thin-film limit of the micromagnetic energy functional in the presence of bulk Dzyaloshinskii--Moriya interaction (DMI). Our analysis includes both a stationary $\Gamma$-convergence result for the micromagnetic energy, as well as the identification of the asymptotic behavior of the
 associated Landau--Lifshitz--Gilbert equation.
In particular, we prove that, in the limiting model, part of the DMI term behaves like the projection of the magnetic moment onto the normal to the film, contributing this way to an increase in the shape anisotropy arising from the magnetostatic self-energy.
Finally, we discuss a convergent finite element approach for the approximation of the time-dependent case and use it to numerically compare the original three-dimensional model with the two-dimensional thin-film limit.
\end{abstract}

\maketitle

\section{Introduction}

\subsection{Chiral effects in micromagnetics}

Due to an increasing interest in spintronics applications, magnetic skyrmions are currently the subject of intense research activity, spanning from mathematics to physics and materials science. Although these chiral structures can emerge in different experimental settings, our mathematical analysis is centered on thin films derived from bulk materials without inversion symmetry, where the Dzyaloshinskii--Moriya interaction (DMI) can twist the otherwise ferromagnetic spin arrangement (cf.~\cite{Back_2020}).
 
In this paper, we identify a limiting model for micromagnetic thin films in the presence of bulk DMI.
Our investigations focus on the limiting behavior of the observable magnetization states
as the thickness parameter tends to zero.
The analysis includes both a stationary $\Gamma$-convergence result for the micromagnetic energy,
as well as the identification of the asymptotic behavior of the associated Landau--Lifshitz--Gilbert (LLG) equation.
In particular, the derived limiting model unveils some physics.
Indeed, quite unexpectedly, we find that part of the DMI behaves like the projection of the magnetic moment onto the normal to the film, contributing this way to an increase in the shape anisotropy originating from the magnetostatic self-energy. Our analytical derivation is complemented by a numerical approximation of the limiting solution via a projection-free tangent plane scheme, and the results of numerical simulations are discussed.

Before stating our main results, we set up the physical framework and introduce some notation.
In the variational theory of micromagnetism ({\tmabbr{cf.}} {\cite{BrownB1963,hubert2008magnetic,LandauA1935}}),
the observable states of a rigid ferromagnetic body occupying a region
$\Omega \subseteq \RR^3$ are described by its magnetization $\M$, a vector
field verifying the so-called \tmem{saturation constraint}:
There exists a material-dependent positive constant $M_s$
such that $| \M | = M_s$ in $\Omega$.
The {\tmem{saturation magnetization}} $M_s \assign M_s (T)$  depends
on the temperature $T$, but vanishes above the so-called {\tmem{Curie temperature}} $T_c$
which is characteristic of each crystal type. However, when the ferromagnet is
at a fixed temperature well below $T_c$, the
value of the saturation magnetization can be considered constant in $\Omega$. Therefore, we can
express the magnetization in the form $\M \assign M_s \m$, where $\m : \Omega
\rightarrow \Stwo^2$ is a vector field taking values in the unit sphere
$\Stwo^2$ of $\RR^3$. 

Although the modulus of $\m$ is constant in space, in general, it is not the
case for its direction. In single-crystal ferromagnets
({\tmabbr{cf.}}~{\cite{AcerbiA2006,alouges2015homogenization,Davoli_2020}}), the
observable magnetization states can be described as the
local minimizers of the micromagnetic energy functional which, after a
suitable nondimensionalization, reads as 
\begin{equation}
  \mathcal{G}_{\tmop{sym},\Omega} ( \m ) \assign \underset{= :
  \mathcal{E}_{\Omega} ( \m )}{\frac{1}{2} \int_{\Omega} |
  \nabla \m |^2 \mathd x } + \underset{= : \mathcal{A}_{\Omega} ( \m
  )}{\int_{\Omega} \varphi_{\tmop{an}} ( \m )\, \mathd x} 
  \underset{= : \mathcal{W}_{\Omega} ( \m )}{- \frac{1}{2}
  \int_{\Omega} \hd [ \m \chi_{\Omega} ] \cdot \m\, \mathd x} 
  \underset{= : \mathcal{Z}_{\Omega} ( \m )}{- \int_{\Omega}
  \tmmathbf{h}_a \cdot \m\, \mathd x}  \label{eq:GLunorm}
\end{equation}
with $\m \in H^1 ( \Omega, \Stwo^2 )$, and {where} $\m \chi_{\Omega}$ denotes the
extension of $\m$ by zero to the whole space outside $\Omega$. 

The {\tmem{exchange energy}} $\mathcal{E}_{\Omega} ( \m )$
penalizes nonuniformities in the orientation of
the magnetization. The {\tmem{magnetocrystalline anisotropy energy}}
$\mathcal{A}_{\Omega} ( \m )$ accounts for the existence of preferred
directions of the magnetization. In general, $\varphi_{\tmop{an}} : \Stwo^2 \rightarrow \RR_+$
is assumed to be a nonnegative Lipschitz continuous function that vanishes only on a distinguished set of directions known as easy axes.
The quantity $\mathcal{W}_{\Omega} ( \m )$ represents the
{\tmem{magnetostatic self-energy}} and describes the energy due to the
stray field $\hd [ \m \chi_{\Omega} ]$
generated by $\m \chi_{\Omega} \in L^2 ( \RR^3, \RR^3 )$. The stray field can be characterized as the unique solution in $L^2 ( \RR^3, \RR^3)$ of the Maxwell--Amp\`ere equations of magnetostatics \cite{BrownB1962,praetorius2004analysis,Di_Fratta_2019}:
\begin{equation}
  \left\{ \begin{array}{l}
    \tmop{div} \tmmathbf{b} [ \m \chi_{\Omega} ] \eqs 0,\\[2pt]
    \curl  \hd [ \m \chi_{\Omega}] \eqs \tmmathbf{0},\\[2pt]
    \tmmathbf{b} [ \m \chi_{\Omega}] = \mu_0 ( \hd [ \m \chi_{\Omega}] + \m\chi_{\Omega}
    ),
  \end{array} \right.  \quad \label{eq:FarMaxdemag}
\end{equation}
where $\tmmathbf{b} [ \m \chi_{\Omega}]$ denotes the magnetic flux density, and
$\mu_0$ is the vacuum permeability. The
linear operator $\hd : \m\chi_{\Omega} \mapsto \hd [ \m \chi_{\Omega}]$ is then a bounded, nonlocal, and
{\tmem{negative-definite}} operator which satisfies the following energy bounds:
\begin{equation}
  0 \leqslant \int_{\RR^3} | \hd [ \m \chi_{\Omega}] |^2\,\mathd x \eqs -
  \int_{\Omega} \hd [ \m \chi_{\Omega}] \cdot \m \,\mathd x\leqslant \| \m
  \|^2_{L^2 (\Omega)} . \label{eq:hdrels}
\end{equation}
Finally, the term $\mathcal{Z}_{\Omega} ( \m )$ is the
{\tmem{Zeeman energy}} and models the tendency of the specimen to have
its magnetization aligned with the (externally) applied field
$\tmmathbf{h}_a$.
Overall, the variational analysis
of {\eqref{eq:GLunorm}} arises as a nonconvex and nonlocal problem.

In this work, other than the classical energy terms in
$\mathcal{G}_{\tmop{sym},\Omega}$,  we consider the possible lack of centrosymmetry
in the crystal lattice structure of the ferromagnet. Generally
speaking, this can be done by superimposing to the energy density in
{\eqref{eq:GLunorm}} the contribution from suitable Lifshitz
invariants of the chirality tensor $\nabla \m \times \m$. Here, we are
interested in the {\desf{bulk $\tmop{DMI}$}} {contributions,} whose energy density is
given by the trace of {the} chirality tensor. Precisely, for every $\m \in H^1
( \Omega, \Stwo^2 )$, we define the {functional}
\begin{equation}
  \mathcal{D}_{\Omega} ( \m ) \assign \kappa \int_{\Omega} \curl 
  \m \cdot \m\, \mathd x.
\end{equation}
The normalized constant $\kappa \in \RR$ is the so-called {\desf{DMI
constant}}, and its sign determines the chirality of the system. The full
micromagnetic energy functional we are interested in is then
$ \mathcal{G}_{\tmop{sym},\Omega}
( \m ) +\mathcal{D}_{\Omega} ( \m )$.
Since one of our main aims concerns the derivation via
$\Gamma$-convergence of a 2D model, to streamline the presentation of the results,
we will neglect the energy terms $\mathcal{A}_{\Omega}
( \m )$ and $\mathcal{Z}_{\Omega} (\m)$ because in
this dimension reduction context they act as $\Gamma$-continuous perturbations \cite{dal1993introduction} and their $\Gamma$-continuous limit is straightforward to compute.
Summarizing, for {every} $\m \in H^1 ( \Omega, \Stwo^2 )$, we consider
the micromagnetic energy functional 
\begin{equation}
  \mathcal{G}_{\Omega} ( \m ) \assign \frac{1}{2} \int_{\Omega}
  | \nabla \m |^2\, \mathd x+ \kappa \int_{\Omega} \curl  \m \cdot \m\, \mathd x {-}
  \frac{1}{2} \int_{\Omega} \hd [ \m \chi_{\Omega} ] \cdot \m\,\mathd x  .
 \label{eq:GLunormfull}
\end{equation}
The existence of at least one minimizer of $\mathcal{G}_{\Omega} ( \m
)$ in $H^1 ( \Omega, \Stwo^2 )$ easily follows by the
direct method of the calculus of variations. Indeed, although the bulk DMI  energy  $\mathcal{D}_{\Omega}$ is,
\textsl{a~priori}, neither positive nor negative,  it is linear in the partial derivatives and can be controlled by the exchange energy.
Namely, by the identity $\curl  \m= \sum_{i = 1}^3 e_i\times \partial_i\m$, one obtains
\begin{align}
\mathcal{E}_{\Omega} ( \m ){+}\mathcal{D}_{\Omega} ( \m )&\eqs 
  \frac{1}{2} \sum_{i = 1}^3\int_{\Omega}  | \held_i \m |^2\,\textrm{d}x - \frac{1}{2}
  \kappa^2 \sum_{i = 1}^3 \int_{\Omega}| e_i \times \m |^2\,\textrm{d}x 
 \nonumber\\
  & \eqs  \frac{1}{2} \sum_{i = 1}^3  \int_{\Omega}| \held_i \m
  |^2\,\textrm{d}x - \kappa^2|\Omega|, \label{eq:helicalder}
\end{align}
for every $\m \in
H^1 ( \Omega, \Stwo^2 )$, where, for $i = 1, 2, 3$, the quantities 
\begin{equation} \label{eq:helders}
\held_i \m \assign \partial_i \m - \kappa ( e_i
\times \m )
\end{equation}
represent the so-called {\desf{helical derivatives}}~\cite{Melcher14,Di_Fratta_2019b}. The name is motivated by the fact that $\mathfrak{d}_i \m = 0$ for helical fields $\m$ which perform a rotation of constant frequency $\kappa$ perpendicular to the $e_i$ axis, that is, counter-clockwise or clockwise if the sign of $\kappa$ is positive or
negative, respectively. Note that they can also be regarded as a variant
of covariant derivatives emerging in the context of gauged sigma models~\cite{schroers1995bogomol}.

\subsection{State of the art} 

The analysis of micromagnetic thin films is a subject with a long history.
It dates back to the seminal papers \cite{GioiaJames97, carbou2001thin},
where the authors show that in planar thin films,
the effect of the demagnetizing field operator drops down to an easy-surface anisotropy term.
Landau states in thin ferromagnetic films are investigated in \cite{IgnatOtto11}.
A complete reduced theory for thin films micromagnetics has been established in \cite{DeSimoneKohnMuellerOtto02}.
Recently, in \cite{HadijiShirakawa10} the authors have considered static dimension
reduction under possible degeneracy of the material coefficients.
A thorough analysis of soft ferromagnetic films has been the subject
of~\cite{Melcher-reg07, KohnSlastikov05, DeSimoneETal01}
(see also \cite{Slastikov05} for the case of nonuniformly extruded thin shells).
The associated thin-film dynamics has been analyzed
in~\cite{KohnSlastikov-dyn05,Melcher10,EGarcaCervera2001EffectiveDF}.
In \cite{CapellaMelcherOtto07,CoteIgnatMiot14}, the authors characterize Neel walls in ferromagnetic thin films.
The various regimes arising as effect of the presence of external fields
are the focus of~\cite{CanteroOtto06,CanteroOttoSteiner07}.
The study of domain walls has been undertaken
in~\cite{KurzkeMelcherMoser06,Ignat09,IgnatOtto07,KnuepferMuratovNolte19}
(see also \cite{LundMuratov16} for the case of 1D walls
under fourfold anisotropy and \cite{LundMuratovSlastikov20} for ultrathin films),
whereas that of boundary vortices is carried out in \cite{Kurzke06,KurzkeMelcherMoserSpirn11,Moser04,Moser05}.
The effects of periodic surface roughness have been studied in \cite{MoriniSlastikov18}.
In \cite{Muratov19}, the author examines second-order phase transitions with dipolar interactions.
We also refer to \cite[Section 1.2]{KruzikProhl06} and the references therein
for a review on the mathematics of magnetic \emph{planar} thin films.
Magnetic \emph{curved} thin films have been the object of extensive investigations in recent times,
because of their capability to induce an effective antisymmetric interaction even in the absence of DMI
(see, e.g., \cite{Di_Fratta_2019, Di_Fratta_2020, Melcher_2019} and the topical review \cite{Streubel_2016}).
We finally mention \cite{dkps} and \cite{bresciani} for the magnetoelastic case.

Due to the nonlinear, nonconvex, and nonlocal nature of the problem,
the computation of minimizers of the micromagnetic energy
and the numerical approximation of the LLG equation
are challenging tasks.
In the last twenty years,
these problems have been the subject of several mathematical studies;
see, e.g., the monograph~\cite{prohl2001},
the review articles~\cite{KruzikProhl06,carlos2007},
the papers~\cite{bp2006,alouges2008,multiscale2014,akst2014,gao2014,gaussseidel2020},
and the references therein.
As far as the numerical analysis of the LLG equation in the presence of DMI is concerned,
we refer to the recent work~\cite{hrkac2017convergent}. 

\subsection{Contributions of the present work}
For $\varepsilon > 0$, consider the thin-film domain $\Omega_{\varepsilon} \assign \omega
\times (0, \varepsilon)$, where $\omega$ is a bounded domain in $\RR^2$ having
Lipschitz boundary, and assume that $\Omega_{\varepsilon}$ is a thin specimen
of a ferromagnetic body. As it is customary in dimension reduction, we
introduce the family of diffeomorphisms $\chc_{\varepsilon} : (\sigma, s) \in
\RR^2 \times \RR \rightarrow (\sigma, \varepsilon s_{}) \in \RR^2 \times \RR$.
We denote by $\chcM$ the restriction of $\chc_{\varepsilon}$ to the set
$\mathcal{M} \assign \omega \times I$, $I = (0, 1)$.
To every
$\m_{\varepsilon} \in H^1 ( \Omega_{\varepsilon}, \Stwo^2 )$, we
associate a map $\tmu \in H^1 ( \mathcal{M}, \Stwo^2 )$ given by
$\tmu \assign \m_{\varepsilon} \circ \chcM $.
Hereinafter, we  use the convention  that when we talk about weak convergence in $H^1 (\mathcal{M}, \Stwo^2 )$
we refer to the topology induced in $H^1 (\mathcal{M}, \Stwo^2 )$ by the weak topology
of $H^1 ( \mathcal{M}, \RR^3 )$;
a similar remark is understood for strong convergence in $H^1 (\mathcal{M}, \Stwo^2 )$.
Note that, owing to the Rellich theorem, $H^1 (\mathcal{M}, \Stwo^2 )$ is a weakly closed subset
of $H^1 ( \mathcal{M}, \RR^3 )$. 

The first result of this paper concerns the variational characterization of the asymptotic behavior of a rescaling of the sequence $\mathcal{G}_{\Omega_{\varepsilon}}$ in the
limit for $\varepsilon \rightarrow 0$. This amounts to the identification of the $\Gamma$-limit of the family of energy functionals defined by
\begin{equation}
  \mathcal{F}_{\varepsilon} ( \tmu ) := \frac{1}{2}
  \int_{\mathcal{M}} | \nabla_{\varepsilon} \tmu |^2\, \mathd x+ \kappa
  \int_{\mathcal{M}} \curl_{\varepsilon}  \tmu \cdot \tmu\,\mathd x +  \frac{1}{2}
  \int_{\RR^3} | \Keps [ \tmu \chi_{\mathcal{M}}] |^2\,\mathd x,
  \label{eq:Functional}
\end{equation}
 for every $\tmu \in H^1 (
\mathcal{M}, \Stwo^2 )$, where $\Keps [ \tmu \chi_{\mathcal{M}} ] \assign \hd [
\m_{\varepsilon} \chi_{\Omega_{\varepsilon}} ] \circ \chc_{\varepsilon}
\eqs \hd [ ( \tmu \chi_{\mathcal{M}} ) \circ \chcMmone ]
\circ \chc_{\varepsilon}$, $\nabla_{\varepsilon}$ is the
$\varepsilon$-rescaled gradient operator given by $\nabla_{\varepsilon} =
( \partial_1, \partial_2, \varepsilon^{- 1} \ds
)^{\mathsf{T}}$ and $\curle  \tmu = \nabla_{\varepsilon}
\times \tmmathbf{u}$. Note that $\mathcal{F}_{\varepsilon} ( \tmu
) = \frac{1}{\varepsilon} \mathcal{G}_{\Omega_{\varepsilon}} (
\m_{\varepsilon} )$. We show that the asymptotic behavior of minimizers of $\mathcal{F}_{\varepsilon}$ is encoded in the minimizers of an effective functional with a complete local character.
Precisely, our first result is stated in the following theorem.

\begin{theorem}
\label{thm:main1}
The family of functionals $(\mathcal{F}_{\varepsilon})_{\varepsilon>0}$ is equicoercive in the weak topology of $H^1 (\mathcal{M}, \Stwo^2 )$ and  $\Gamma$-converges, weakly in $H^1 (\mathcal{M}, \Stwo^2 )$, to the energy functional $\mathcal{F}_0$ defined by
\begin{equation}
  \mathcal{F}_0 ( \tmu ):= \frac{1}{2} \int_{\omega} |
  \nabla_{\omega} \tmu (\sigma) |^2 \mathd \sigma + \kappa
  \int_{\omega} \curl_{\omega}  \tmu \cdot \tmu\, \mathd\sigma+ \left(\frac{1 + \kappa^2}{2}\right)
  \int_{\omega} ( \tmu (\sigma) \cdot e_3 )^2 \mathd \sigma-\frac{\kappa^2}{2}|\omega|,
  \label{eq:gammalimit}
\end{equation}
if $\tmu\in H^1 ( \mathcal{M}, \Stwo^2 )$ is independent of the $s$-variable, and $\mathcal{F}_0 (\tmu) = + \infty$ otherwise. In the expression above, 
$\nabla_\omega:=(\partial_1,\partial_2)$ and $ \curl_\omega:=e_1 \times \partial_1+e_2 \times \partial_2$,
denote the corresponding differential operators meant with respect to the tangential variables $(x_1,x_2)\in \omega$. 
In particular, if for every $\varepsilon$ the map  $\ue \in H^1(\mathcal{M}, \Stwo^2)$  is a  minimizer of $\mathcal{F}_{\varepsilon}$, then, upon possible extraction of a subsequence, $(\ue)_{\varepsilon>0}$ converges strongly in $H^1(\mathcal{M}, \Stwo^2)$ to a minimizer of $\mathcal{F}_0$.
\end{theorem}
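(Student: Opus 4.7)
The plan is to establish the three standard ingredients of a $\Gamma$-convergence proof (equicoercivity, lim\-inf inequality, recovery sequence) by systematically rewriting the exchange-plus-DMI contribution through the \emph{rescaled} helical derivatives $\held_i^\varepsilon \tmu := \partial_i^\varepsilon \tmu - \kappa (e_i \times \tmu)$, with $\partial_1^\varepsilon = \partial_1$, $\partial_2^\varepsilon = \partial_2$, $\partial_3^\varepsilon = \varepsilon^{-1}\partial_s$. The identity \eqref{eq:helicalder}, adapted to $\mathcal{M}$ with the rescaled operators, gives
\begin{equation*}
\mathcal{F}_\varepsilon(\tmu) = \frac{1}{2}\sum_{i=1}^3\int_\mathcal{M}|\held_i^\varepsilon \tmu|^2\, \mathd x + \frac{1}{2}\int_{\RR^3}|\Keps[\tmu\chi_\mathcal{M}]|^2\, \mathd x - \kappa^2|\omega|.
\end{equation*}
Combined with the nonnegativity of the magnetostatic term and the elementary bound $|\partial_i^\varepsilon \tmu|^2 \le 2|\held_i^\varepsilon \tmu|^2 + 2\kappa^2$, this yields a uniform $H^1$-bound for any bounded-energy sequence together with $\|\partial_s \tmu_\varepsilon\|_{L^2(\mathcal{M})} = O(\varepsilon)$; every weak $H^1$-cluster point is therefore $s$-independent, which establishes equicoercivity.

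For the lim\-inf, fix $\tmu_\varepsilon \rightharpoonup \tmu$ in $H^1(\mathcal{M}, \Stwo^2)$. The Rellich theorem yields strong $L^2$-convergence, so that $\held_i^\varepsilon \tmu_\varepsilon \rightharpoonup \partial_i \tmu - \kappa(e_i\times\tmu) =: \held_i^\omega \tmu$ in $L^2$ for $i = 1, 2$, and the weak lower semicontinuity of the $L^2$-norm gives $\liminf \frac{1}{2}\sum_{i=1}^2 \int_\mathcal{M}|\held_i^\varepsilon\tmu_\varepsilon|^2 \ge \frac{1}{2}\sum_{i=1}^2\int_\omega|\held_i^\omega\tmu|^2$; the $i = 3$ term I simply discard via $|\held_3^\varepsilon \tmu_\varepsilon|^2 \ge 0$. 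For the magnetostatic self-energy I would invoke the classical thin-film asymptotics (in the spirit of \cite{GioiaJames97,DeSimoneKohnMuellerOtto02}), giving $\liminf \frac{1}{2}\int_{\RR^3}|\Keps[\tmu_\varepsilon\chi_\mathcal{M}]|^2 \ge \frac{1}{2}\int_\omega (\tmu\cdot e_3)^2\, \mathd\sigma$. Expanding the two tangential helical derivatives via the 2D analogue of \eqref{eq:helicalder}, namely $\sum_{i=1}^2 |\held_i^\omega \tmu|^2 = |\nabla_\omega\tmu|^2 + 2\kappa\, \curl_\omega\tmu\cdot\tmu + \kappa^2\bigl(1 + (\tmu\cdot e_3)^2\bigr)$, and collecting the $(\tmu\cdot e_3)^2$-coefficients $\frac{\kappa^2}{2}$ (from DMI) and $\frac{1}{2}$ (from demag) together with the cancellation of $\pm\kappa^2|\omega|$, reconstitutes exactly $\mathcal{F}_0(\tmu)$.

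For the recovery sequence, the naive constant extension $\tmu_\varepsilon(\sigma, s) = \tmu(\sigma)$ overshoots by $\frac{\kappa^2}{2}\int_\omega |e_3\times\tmu|^2$, precisely because $|\held_3^\varepsilon \tmu|^2 = \kappa^2|e_3\times\tmu|^2$ does not vanish. I would correct this by rotating through the thickness: take
\begin{equation*}
\tmu_\varepsilon(\sigma, s) := R_{\kappa\varepsilon s}^{e_3}\, \tmu(\sigma),
\end{equation*}
where $R_\theta^{e_3}$ denotes rotation by angle $\theta$ about the $e_3$-axis. Then $|\tmu_\varepsilon| = 1$, $\partial_s \tmu_\varepsilon = \kappa\varepsilon(e_3\times\tmu_\varepsilon)$ forces $\held_3^\varepsilon\tmu_\varepsilon \equiv 0$, and the uniform convergence $R_{\kappa\varepsilon s}^{e_3} \to I$ on $s \in I$ produces $\held_i^\varepsilon\tmu_\varepsilon \to \held_i^\omega\tmu$ strongly in $L^2$ for $i = 1, 2$ and $\tmu_\varepsilon \to \tmu$ in $L^2$, delivering also the demag limit. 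A direct computation matches $\lim \mathcal{F}_\varepsilon(\tmu_\varepsilon) = \mathcal{F}_0(\tmu)$. Convergence of minimizers is then a standard consequence of $\Gamma$-convergence with equicoercivity; the upgrade to \emph{strong} $H^1$-convergence follows because the equality $\mathcal{F}_\varepsilon(\ue) \to \mathcal{F}_0(\tmu)$ forces every lim\-inf inequality above to be saturated, yielding $\held_i^\varepsilon \ue \to \held_i^\omega \tmu$ strongly in $L^2$ for $i = 1, 2$ and $\held_3^\varepsilon \ue \to 0$ in $L^2$, hence $\partial_i \ue \to \partial_i \tmu$ and $\partial_s \ue \to 0$ in $L^2$.

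The main technical obstacle I foresee is the magnetostatic limit: the operator $\Keps$ is nonlocal in $\RR^3$ and must be controlled uniformly under the anisotropic rescaling, both in the lim\-inf inequality and in the continuity step for the recovery sequence. The conceptual payoff of the helical-derivative rewriting is the appearance of the coefficient $\frac{1 + \kappa^2}{2}$ in front of $(\tmu\cdot e_3)^2$: the $\frac{\kappa^2}{2}$ is the ``hidden'' DMI contribution flowing from the tangential helical derivatives, which combines additively with the standard $\frac{1}{2}$ of the thin-film demag to enhance the effective out-of-plane shape anisotropy announced in the abstract.
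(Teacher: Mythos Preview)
Your argument is correct and follows the same overall strategy as the paper: rewrite $\mathcal{F}_\varepsilon$ through the rescaled helical derivatives to obtain equicoercivity and the liminf inequality, then build a recovery sequence that kills the third helical derivative. The one genuinely different ingredient is your recovery sequence. The paper takes the nearest-point projection of a linear tangential perturbation,
\[
\ues(\sigma,s)=\frac{\tmu_0(\sigma)+\varepsilon s\kappa\,(e_3\times\tmu_0(\sigma))}{\lvert\tmu_0(\sigma)+\varepsilon s\kappa\,(e_3\times\tmu_0(\sigma))\rvert},
\]
for which $\held_3^\varepsilon\ues\to 0$ only in the limit, whereas your rotation $\tmu_\varepsilon=R^{e_3}_{\kappa\varepsilon s}\tmu$ enforces $\held_3^\varepsilon\tmu_\varepsilon\equiv 0$ identically. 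Both work; yours is arguably cleaner (exact rather than asymptotic cancellation, and no quotient to differentiate), while the paper's construction has the advantage of making the role of the tangent-space correction $e_3\times\tmu_0$ explicit without invoking a one-parameter group. For the magnetostatic term, where you gesture at ``classical thin-film asymptotics'', the paper invokes \cite[Lemma~2.1]{carbou2001thin}, which gives the \emph{strong} $L^2(\RR^3)$ convergence $\Keps[\ue\chi_{\mathcal{M}}]\to -(e_3\otimes e_3)\tmu_0\,\chi_{\mathcal{M}}$; this is what makes the recovery-sequence limit and the upgrade to strong $H^1$-convergence of minimizers go through cleanly, and it resolves exactly the obstacle you flag at the end.
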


\begin{remark} \label{rem:curl2D}
Note that the definition of $\curl_\omega = e_1 \times \partial_1 + e_2 \times \partial_2$ in Theorem~\ref{thm:main1} is formally consistent with the 3D curl if $\partial_3\tmu=0$.
\end{remark}

\begin{remark}
We stress that part of the DMI energy in \eqref{eq:Functional} contributes, in the limiting energy \eqref{eq:gammalimit}, to an increase in the shape anisotropy of the thin film through the energy density $\kappa^2( \tmu (\sigma) \cdot e_3 )^2$. Also, we observe that in contrast to the classical setting $\kappa =0$ (cf.~\cite{GioiaJames97, carbou2001thin}), where constant in-plane magnetizations are the minimizers of $\mathcal{F}_0$
(and higher-order terms in the asymptotic expansions are needed to gather more information on the direction of the magnetization), here the presence of the DMI makes the reduced energy functional $\mathcal{F}_0$ nontrivial.
\end{remark}

 The proof strategy essentially relies on the notion of $\Gamma$-convergence (see \cite{dal1993introduction}). The equicoercivity of the energy functionals is established in Proposition \ref{prop:equicoercive} and is based on the observation in \eqref{eq:helicalder}.
 In Propositions \ref{prop:liminf} and \ref{prop:limsup}, respectively,
 we prove that $\mathcal{F}_0$ provides a lower bound for the asymptotic behavior of the energies $(\mathcal{F}_{\varepsilon})_{\varepsilon>0}$ and show that this lower bound is indeed optimal.

Our second contribution concerns the derivation of a thin-film model for the LLG equation
which describes the magnetization dynamics in small ferromagnetic samples.
In our context, the LLG equation
reads (in strong form) as
\begin{equation}
 \begin{cases}  \partial_t \tmu  \eqs  - \tmu \times  \heffe[\tmu] + \alpha \tmu
    \times \partial_t \tmu  & \text{in } \mathcal{M} \times \RR_+,\\
    \partial_{\tmmathbf{n}} \tmu  \eqs  - \kappa \tmu \times \tmmathbf{n} &
    \text{on } \partial \mathcal{M} \times \RR_+,\\
    \tmu (0) \eqs  \tmu^0 & \text{in } \mathcal{M},
  \end{cases} \label{eq:LLGstronga}
\end{equation}
where $\alpha>0$ is a dimensionless damping factor,
while $\tmu^0$ is the magnetization at time $t=0$. The dynamics is driven by the effective field $\heffe[\tmu]:= -\partial_{\tmu}   \mathcal{F}_{\varepsilon}$ which is defined as the opposite of the first-order variation of the energy $\mathcal{F}_{\varepsilon}$. Our second result concerns  families  of weak solutions of the LLG equation
(we refer to Section \ref{sec:LLG} for the precise definition) and reads as follows.

\begin{theorem}
\label{thm:main2}
Let $\tmu^0(\sigma,s):=\tmu^0(\sigma)\in H^1(\omega,\Stwo^2)$ and, for every $\varepsilon>0$, let $\tmu_\varepsilon$ be a weak solution of~\eqref{eq:LLGstronga} with initial datum $\tmu^0$.
Then, there exists a magnetization $\tmu_0\in L^\infty(\RR_+;H^1(\omega,\Stwo^2))$ such that $\tmu_0\in H^1(\omega\times (0,T),\Stwo^2))$ for every $T>0$ and,  up to the extraction of a nonrelabeled subsequence,  
\[
\ue \rightharpoonup \tmu_0 \quad \text{weakly$^*$ in } L^\infty(\RR_+;H^1(\mathcal{M},\Stwo^2)).
\] 
The limit magnetization $\tmu_0$ satisfies in the weak sense the boundary value problem
\begin{equation}
  \begin{cases}
    \partial_t \tmu_0  \eqs - \tmu_0 \times \heffzero [ \tmu_0 ]
    + \alpha \tmu_0 \times \partial_t \tmu_0 & \text{in } \omega
    \times \RR_+,\\
    \partial_{\tmmathbf{n}} \tmu_0 \eqs - \kappa \tmu_0 \times
    \tmmathbf{n} & \text{on } \partial \omega \times \RR_+,\\
    \tmu_0 (0) \eqs  \tmu^0 & \text{in } \omega,
  \end{cases}  \label{eq:LLGlimitstrong}
\end{equation}
where the limiting effective field $\heffzero [ \tmu_0 ] \assign \Delta_{\omega} \tmu_0 - 2 \kappa
  \curl_{\omega} \tmu_0 - (1 + \kappa^2) (e_3 \otimes e_3) \tmu_0$
coincides with the opposite of the first-order variation of the $\Gamma$-limit energy $\mathcal{F}_0$ $($cf.~\eqref{eq:gammalimit}$)$. Moreover, for almost every  $T>0$, the following  energy  inequality holds
\begin{equation}
\label{eq:en-in}
\mathcal{F}_0(\tmu_0(T)) + \alpha \int_{0}^T \| \partial_t \tmu_0\|^2_{L^2(\omega,\RR^3)}\,\mathd t \leqslant \mathcal{F}_0(\tmu^0) .
\end{equation}  
\end{theorem}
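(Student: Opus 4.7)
The plan is to exploit the standard energy--dissipation inequality satisfied by weak solutions of the three-dimensional rescaled LLG system \eqref{eq:LLGstronga}, namely
\begin{equation*}
\mathcal{F}_{\varepsilon}(\ue(T)) + \alpha \int_{0}^{T} \| \partial_t \ue(t) \|^2_{L^2(\mathcal{M},\RR^3)} \, \mathd t \;\leqslant\; \mathcal{F}_{\varepsilon}(\tmu^0)
\end{equation*}
for a.e.\ $T>0$. Since $\tmu^0$ is independent of $s$, a direct computation shows that $\mathcal{F}_{\varepsilon}(\tmu^0)\to \mathcal{F}_0(\tmu^0)$, so the right-hand side is uniformly bounded in $\varepsilon$. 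Combined with the helical-derivative identity \eqref{eq:helicalder}, this yields $\varepsilon$-uniform bounds for $\nabla_{\varepsilon}\ue$ in $L^{\infty}(\RR_+;L^2(\mathcal{M},\RR^3))$ --- in particular for $\varepsilon^{-1}\ds \ue$ in the same space --- and for $\partial_t \ue$ in $L^2_{\mathrm{loc}}(\RR_+;L^2(\mathcal{M},\RR^3))$. The saturation constraint $\ue\in\Stwo^2$ a.e.\ is preserved pointwise.

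By Banach--Alaoglu and a diagonal extraction, I would extract a nonrelabelled subsequence converging weakly$^{\ast}$ in $L^{\infty}(\RR_+;H^1(\mathcal{M},\Stwo^2))$ to a limit $\tmu_0$ with $\partial_t \tmu_0 \in L^2_{\mathrm{loc}}$. The bound on $\varepsilon^{-1}\ds \ue$ forces $\ds \tmu_0 = 0$, so $\tmu_0$ descends to a map on $\omega\times\RR_+$. An Aubin--Lions argument on each time slab $(0,T)$ upgrades this to strong convergence of $\ue$ in $L^2((0,T)\times \mathcal{M},\RR^3)$, and by interpolation with the pointwise saturation to strong convergence in every $L^p_{\mathrm{loc}}$, $p<\infty$.

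The next step is to pass to the limit in the weak formulation of \eqref{eq:LLGstronga}, testing against smooth $s$-independent vector fields $\pphi\in C^{\infty}_c(\omega\times\RR_+,\RR^3)$ (the DMI--Neumann boundary condition is absorbed through the usual Green identity). The term $\partial_t \ue$ passes by weak convergence, while $\ue\times\partial_t\ue$ passes as the product of a strongly-$L^2$ and a weakly-$L^2$ convergent sequence. After integration by parts, the exchange term reads $-\nabla_{\varepsilon}\ue : \nabla_{\varepsilon}(\ue\times\pphi)$ and loses its $\ds$-component in the limit thanks to $\|\varepsilon^{-1}\ds \ue\|_{L^2}=O(1)$ combined with strong convergence of $\ue$, yielding in the limit $\Delta_{\omega}\tmu_0\cdot(\tmu_0\times\pphi)$. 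The DMI bilinear form is continuous for the weak $H^1$ topology and converges to $-2\kappa\,\curl_{\omega}\tmu_0\cdot(\tmu_0\times\pphi)$ (the factor $2$ is the standard variational derivative of the bilinear chirality form).

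The main technical point --- and the principal obstacle --- is the magnetostatic contribution $\Keps[\ue\chi_{\mathcal{M}}]$. Here I would reuse the key ingredient from the proof of Theorem~\ref{thm:main1}: for sequences $\ue\to \tmu_0$ with $\tmu_0$ independent of $s$, one has $\Keps[\ue\chi_{\mathcal{M}}] \to -(\tmu_0\cdot e_3)e_3$ strongly in $L^2(\mathcal{M},\RR^3)$. Adding the $-\kappa^2(\tmu_0\cdot e_3)e_3$ contribution that emerges from completing the squares in \eqref{eq:helicalder} once $\ds \tmu_0 =0$ reproduces precisely the anisotropy coefficient $-(1+\kappa^2)(e_3\otimes e_3)\tmu_0$ of $\heffzero[\tmu_0]$. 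Finally, the energy inequality~\eqref{eq:en-in} follows by combining (i) the $\Gamma$-$\liminf$ inequality of Theorem~\ref{thm:main1} applied at a.e.\ $T>0$, giving $\mathcal{F}_0(\tmu_0(T))\leqslant\liminf_{\varepsilon}\mathcal{F}_{\varepsilon}(\ue(T))$, (ii) weak lower semicontinuity $\int_0^T\|\partial_t \tmu_0\|^2_{L^2(\omega,\RR^3)}\,\mathd t \leqslant \liminf_\varepsilon \int_0^T\|\partial_t \ue\|^2_{L^2(\mathcal{M},\RR^3)}\,\mathd t$, and (iii) the initial-datum convergence $\mathcal{F}_\varepsilon(\tmu^0)\to\mathcal{F}_0(\tmu^0)$ established in the first paragraph.
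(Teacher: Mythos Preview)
Your overall strategy is correct and parallels the paper's, but there is a genuine gap in the passage to the limit in the weak LLG formulation. You claim that ``the DMI bilinear form is continuous for the weak $H^1$ topology and converges to $-2\kappa\,\curl_{\omega}\tmu_0\cdot(\tmu_0\times\pphi)$''. This overlooks that $\curl_{\varepsilon}\ue = \curl_{\omega}\ue + e_3 \times \varepsilon^{-1}\ds\ue$, and the second summand is merely \emph{bounded} in $L^2$, not small; its weak limit is $e_3\times\tmmathbf{d}_0$ for an \emph{a priori} unknown $\tmmathbf{d}_0$, the weak $L^2$-limit of $\varepsilon^{-1}\ds\ue$. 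Even after you test with $s$-independent $\pphi$ and kill the $\varepsilon^{-2}$ exchange term by antisymmetry (which is the real reason that term vanishes, not the $O(1)$ bound you cite), the $i=3$ part of $\mathfrak{D}_{\varepsilon}\ue:\mathfrak{D}_{\varepsilon}(\pphi\times\ue)$ still produces cross-terms of the form $\kappa\,(\varepsilon^{-1}\ds\ue\cdot e_3)\,\ue - \kappa\,(\ue\cdot e_3)\,\varepsilon^{-1}\ds\ue$ whose limit involves $\tmmathbf{d}_0$. Your remark that the extra $\kappa^2$ anisotropy ``emerges from completing the squares once $\ds\tmu_0=0$'' is not a substitute: the limiting problem lives on $\omega$ and carries no $\ds$-derivative to complete any square with.

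The paper fills this gap by a preliminary step that you are missing: multiply the weak formulation~\eqref{eq:LLGweakformrescaled} by $\varepsilon$ and test against \emph{$s$-dependent} test functions. In the limit only the highest-order $\varepsilon^{-1}\ds$-contribution survives, yielding the pointwise relation $\tmu_0\times(\tmmathbf{d}_0 - \kappa(e_3\times\tmu_0))=0$. Combined with $\tmmathbf{d}_0\cdot\tmu_0=0$ (obtained from $\varepsilon^{-1}\ds\ue\cdot\ue\equiv 0$ and weak--strong convergence), this forces $\tmmathbf{d}_0 = \kappa(e_3\times\tmu_0)$. Only after this identification can one test with $s$-independent $\pphi$ and pass to the limit; the $\tmmathbf{d}_0$-terms then produce exactly the additional $\kappa^2(\tmu_0\cdot e_3)(\tmu_0\times e_3)\cdot\pphi$ contribution that yields the enhanced anisotropy $(1+\kappa^2)(e_3\otimes e_3)\tmu_0$ in $\heffzero$. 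Your argument for the energy inequality is essentially correct; the paper instead averages over $(t-\delta,t+\delta)$, applies Fatou and \cite[Lemma~2.1]{carbou2001thin}, and passes $\delta\to 0$ at Lebesgue points of $t\mapsto\mathcal{F}_0(\tmu_0(t))$, which sidesteps the need to justify $\ue(T)\rightharpoonup\tmu_0(T)$ in $H^1$ at fixed $T$, but your direct route (uniform $H^1$ bound plus $C^0_tL^2_x$ convergence from Aubin--Lions) works as well once you mind the $\varepsilon$-dependent null sets in~\eqref{eq:energyinequalityrescaled}.
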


The proof of this second result is based on the combination of some \textsl{a~priori} energy estimates with
{\cite[Lemma~2.1]{carbou2001thin}} (see also \cite[Lemma~1]{Di_Fratta_2020} for a more general statement),
and on the application of the Aubin--Lions lemma.

Our third and last result concerns the numerical approximation
of~\eqref{eq:LLGstronga} and~\eqref{eq:LLGlimitstrong}.
We propose an improved version of the projection-free finite element tangent plane scheme introduced
in~\cite{hrkac2017convergent}
(see Algorithm~\ref{alg:tps} below).
The main novelty consists in an implicit-explicit approach for treating
the effective field contributions,
which is designed in such a way that
the discrete energy law satisfied by the approximations
mimics the dissipative energy law of the continuous problem
(Proposition~\ref{prop:numerics}).
Moreover, suitable time reconstructions built using
the approximations generated by the algorithm converge weakly and unconditionally towards a weak solution
of the problem (Theorem~\ref{thm:numerics}).
Here,
the word `unconditionally' refers to the fact that the convergence analysis of Algorithm~\ref{alg:tps}
does not require
any coupling condition between the spatial mesh size and the time-step size.

Besides its own mathematical interest,
the derivation of effective thin-film micromagnetic models
also has relevant practical implications.
The capability to perform reliable micromagnetic simulations
using a 2D model is very favorable in terms of computational cost.
On the one hand, the complexity of the simulation is clearly reduced by the
dimension reduction (from 3D to 2D).
On the other hand,
since in the thin-film limit the nonlocal magnetostatic interaction
reduces to a local shape anisotropy,
one can save the cost of solving the magnetostatic Maxwell equations~\eqref{eq:FarMaxdemag}
(which usually involves linear algebra operations with large fully populated matrices).
This has a significant impact on the overall computational cost,
as, for practically relevant problem sizes,
the computation of the magnetostatic interaction is usually
the most time-consuming part of a micromagnetic simulation~\cite{aesds2013}.
In the last section of the present work,
presenting the results of two numerical experiments,
we validate our theoretical findings.
At the same time,
we show the effectivity of the proposed numerical scheme
and give a flavor of the computational benefits of our approach.

\subsection{Outline}

The paper is organized as follows.
In Section~\ref{subsec:equicoercivityffprime},
we prove the static $\Gamma$-convergence result of Theorem~\ref{thm:main1}.
In Section~\ref{sec:LLG}, we recall the definition of weak solutions of the LLG equation,
and provide a convergence analysis from the 3D setting to the reduced limiting  model.
Section~\ref{sec:num} is devoted to the description and the analysis of the approximation scheme,
and Section~\ref{sec:sim} to the presentation of the results of numerical simulations.

\section{Gamma-convergence: Proof of Theorem~\ref{thm:main1}}\label{subsec:equicoercivityffprime}

To prove Theorem~\ref{thm:main1}, we first show that the family $(\mathcal{F}_{\varepsilon})_{\varepsilon \in I}$ is
equicoercive in the weak topology of $H^1( \mathcal{M}, \Stwo^2)$. 
This step assures the validity of the
fundamental theorem of $\Gamma$-convergence concerning the variational
convergence of minimum problems ({\tmabbr{cf.}}
{\cite{braides1998homogenization,dal1993introduction}}).

\begin{proposition}
\label{prop:equicoercive}
There exists a (nonempty) weakly compact set $\mathcal{K} \subset H^1
( \mathcal{M}, \Stwo^2 )$ such that $\inf_{H^1 ( \mathcal{M},
\Stwo^2 )} \mathcal{F}_{\varepsilon} = \inf_\mathcal{K} \mathcal{F}_{\varepsilon}$
for every $\varepsilon \in I$.
\end{proposition}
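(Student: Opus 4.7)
My plan is to exploit the helical-derivative identity~\eqref{eq:helicalder} in the rescaled setting to obtain simultaneously a uniform-in-$\varepsilon$ lower bound for $\mathcal{F}_\varepsilon$ and an $\varepsilon$-independent bound on the $H^1$ norm of any low-energy configuration. Defining the rescaled helical derivative $\held_i^\varepsilon\tmu := (\nabla_\varepsilon)_i\tmu - \kappa(e_i\times\tmu)$, the pointwise algebra that underlies~\eqref{eq:helicalder}, combined with the saturation constraint $|\tmu|=1$, yields
\begin{equation*}
\frac{1}{2}\int_{\mathcal{M}}|\nabla_\varepsilon\tmu|^2\,\mathd x + \kappa\int_{\mathcal{M}}\curle\tmu\cdot\tmu\,\mathd x \;=\; \frac{1}{2}\sum_{i=1}^3\int_{\mathcal{M}}|\held_i^\varepsilon\tmu|^2\,\mathd x - \kappa^2|\mathcal{M}|.
\end{equation*}
Together with the nonnegativity of the magnetostatic contribution in~\eqref{eq:Functional}, this produces the uniform lower bound $\mathcal{F}_\varepsilon(\tmu)\geq -\kappa^2|\mathcal{M}|$.

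Next, I would bound $\inf\mathcal{F}_\varepsilon$ from above, uniformly in $\varepsilon$, by testing the functional at the constant state $\tmu\equiv e_1\in H^1(\mathcal{M},\Stwo^2)$: the exchange and DMI terms vanish, while the magnetostatic contribution is controlled by~\eqref{eq:hdrels} together with the change of variables $\chc_\varepsilon$ (whose Jacobian $\varepsilon$ exactly cancels the prefactor $\varepsilon^{-1}$ in the relation $\mathcal{F}_\varepsilon=\varepsilon^{-1}\mathcal{G}_{\Omega_\varepsilon}$), giving $\mathcal{F}_\varepsilon(e_1)\leq|\omega|/2$ for every $\varepsilon\in I$. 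Setting $M := |\omega|/2 + 1$, the identity above then gives a bound on $\sum_{i=1}^3\|\held_i^\varepsilon\tmu\|_{L^2(\mathcal{M})}^2$ that is uniform in $\varepsilon$ on the sublevel set $\{\mathcal{F}_\varepsilon\leq M\}$.

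To convert this into an $H^1$ bound, I would apply the elementary pointwise inequality $|(\nabla_\varepsilon)_i\tmu|^2\leq 2|\held_i^\varepsilon\tmu|^2 + 2\kappa^2|e_i\times\tmu|^2$, summed in $i$ together with $\sum_i|e_i\times\tmu|^2=2$, to extract an $\varepsilon$-independent bound on $\|\nabla_\varepsilon\tmu\|_{L^2(\mathcal{M})}$. Since $I=(0,1)$, one has $|\nabla\tmu|^2\leq|\nabla_\varepsilon\tmu|^2$ pointwise, and in combination with the trivial $L^2$ bound $\|\tmu\|_{L^2(\mathcal{M})}^2=|\mathcal{M}|$ from saturation this yields a uniform bound $\|\tmu\|_{H^1(\mathcal{M},\RR^3)}\leq R$ on the same sublevel set.

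Finally, I would take $\mathcal{K}$ to be the intersection of the closed $R$-ball of $H^1(\mathcal{M},\RR^3)$ with $H^1(\mathcal{M},\Stwo^2)$. This set is nonempty (it contains $e_1$), bounded in $H^1(\mathcal{M},\RR^3)$, and weakly closed, since $H^1(\mathcal{M},\Stwo^2)$ is weakly closed in $H^1(\mathcal{M},\RR^3)$ by the Rellich theorem, as recalled in the introduction. Hence $\mathcal{K}$ is weakly compact, and by construction every minimizing sequence of $\mathcal{F}_\varepsilon$ eventually lies in $\mathcal{K}$, whence $\inf_{H^1(\mathcal{M},\Stwo^2)}\mathcal{F}_\varepsilon = \inf_{\mathcal{K}}\mathcal{F}_\varepsilon$ for every $\varepsilon\in I$. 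The only slightly delicate point is keeping all constants genuinely independent of $\varepsilon$; this is forced by the positivity of the magnetostatic energy and by~\eqref{eq:hdrels}, which together make the whole chain of estimates dimensionally transparent.
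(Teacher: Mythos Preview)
Your proposal is correct and follows essentially the same approach as the paper: both use the rescaled helical-derivative identity to obtain~\eqref{eq:Functionalhel}, test at a constant configuration to get a uniform upper bound on the infimum, and then use the elementary inequality $|a|^2\le 2|a-b|^2+2|b|^2$ (equivalently, the paper's ``reverse Young inequality'' $|a-b|^2\ge\tfrac12|a|^2-|b|^2$) to extract a uniform $H^1$ bound on the sublevel sets, concluding by intersecting a ball in $H^1$ with the weakly closed set $H^1(\mathcal{M},\Stwo^2)$. The only cosmetic differences are your explicit choice $\tmu\equiv e_1$ and the explicit mention that $|\nabla\tmu|^2\le|\nabla_\varepsilon\tmu|^2$ for $\varepsilon\in(0,1)$, both of which the paper leaves implicit.
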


\begin{proof}
Owing to {\eqref{eq:helicalder}}, the following identity holds true for every $\tmu \in H^1
( \mathcal{M}, \Stwo^2 )$:
\begin{equation}
  \mathcal{F}_{\varepsilon} ( \tmu ) + \kappa^2 | \mathcal{M} |
  = \frac{1}{2} \int_{\mathcal{M}} | \mathfrak{D}_{\varepsilon} \tmu
  |^2 \,\mathd x+ \frac{1}{2} \int_{\RR^3} | \Keps [ \tmu \chi_{\mathcal{M}}]
  |^2\, \mathd x, \label{eq:Functionalhel}
\end{equation}
where $\mathfrak{D}_{\varepsilon}$ is the
`Jacobian' matrix of the $\varepsilon$-rescaled
helical derivatives
\begin{equation}
  \mathfrak{D}_{\varepsilon} \tmu := \left(
    \partial_1 \tmu - \kappa ( e_1 \times \tmu ),
    \partial_2 \tmu - \kappa ( e_2 \times \tmu ),
    {\varepsilon}^{-1} \ds \tmu - \kappa ( e_3 \times \tmu )\right).
    \label{eq:jachelic}
\end{equation}
 If $\tmu \in H^1 ( \mathcal{M}, \Stwo^2 )$ is constant in
space then, taking into account {\eqref{eq:hdrels}}, we deduce the bound 
\begin{equation}
  \min_{\tmu \in H^1 ( \mathcal{M}, \Stwo^2 )}
  \mathcal{F}_{\varepsilon} ( \tmu ) \leqslant c_{\mathcal{M}},
\end{equation}
for some constant $c_{\mathcal{M}} > 0$ depending only on the volume of
$\mathcal{M}$. Therefore, for every $\varepsilon \in I$, the minimizers of
$(\mathcal{F}_{\varepsilon})_{\varepsilon \in I}$ are in the set
\begin{equation}
K (
\mathcal{M}, \Stwo^2 ) \assign \bigcup_{\varepsilon \in I} \{ \tmu \in
H^1 ( \mathcal{M}, \Stwo^2 ) \of \mathcal{F}_{\varepsilon} ( \tmu
) \leqslant \; c_{\mathcal{M}} \}.
\end{equation}
On the other hand, the
reverse Young inequality proves that for $i = 1, 2$
\begin{align}
  | \partial_i \tmu - \kappa ( e_i \times \tmu ) |^2 
  & \geqslant  \frac{1}{2} | \partial_i \tmu |^2 - \kappa^2, 
  \label{eq:ddf1}\\
  | {\varepsilon}^{-1} \ds \tmu - \kappa ( e_3 \times \tmu
  ) |^2 & \geqslant  \frac{1}{2 \varepsilon^2} | \ds \tmu
  |^2 - \kappa^2 .  \label{eq:ddf2}
\end{align}
In particular, for every $\tmu \in K ( \mathcal{M}, \Stwo^2 )$ we
have that $\| \grad \tmu \|^2_{L^2 ( \mathcal{M}, \RR^{3 \times 3}
)} \leqslant c_{\kappa, \mathcal{M}}$ for some positive constant
depending only on $\kappa$ and the measure of $\mathcal{M}$. Hence, $K (
\mathcal{M}, \Stwo^2 )$ is contained in a ball $B_{\mathcal{M}}$ of $H^1
( \mathcal{M}, \RR^3 )$ whose radius depends only on $\kappa$ and
$| \mathcal{M} |$. Setting $\mathcal{K} \assign B_{\mathcal{M}} \cap H^1
( \mathcal{M}, \Stwo^2 )$, we conclude that
\begin{equation}
  \inf_{H^1 ( \mathcal{M}, \Stwo^2 )} \mathcal{F}_{\varepsilon}
  = \inf_{\mathcal{K}} \mathcal{F}_{\varepsilon},
\end{equation}
where $\mathcal{K}$ is weakly compact being the intersection of the weakly closed
set $H^1 ( \mathcal{M}, \Stwo^2 )$ and the weakly compact set
$B_{\mathcal{M}}$.
\end{proof}

We proceed by showing that the functional $\mathcal{F}_0$ introduced in \eqref{eq:gammalimit} provides a lower bound for the asymptotic behavior of the energies $\mathcal{F}_{\varepsilon}$.

\begin{proposition}
\label{prop:liminf}
Let $(\ue)_{\varepsilon>0}$ in $H^1(\mathcal{M}, \Stwo^2)$ be such that 
$\liminf_{\varepsilon \rightarrow 0} \mathcal{F}_{\varepsilon} ( \ue )<+\infty$.
Then, there exist $\tmu_0 \in H^1 ( \omega, \Stwo^2)$ and $\tmmathbf{d}_0 \in L^2 ( \mathcal{M}, \RR^3)$ with
\begin{equation}
\tmmathbf{d}_0(\sigma,s)\cdot \tmu_0(\sigma) = 0 \quad \text{ for a.e. $(\sigma,s)\in  \mathcal{M}$},
\end{equation}
 such that, up to the extraction of a (not relabeled) subsequence, there holds
\begin{align}
  \qquad\qquad\qquad \ue (\sigma, s) & \, \rightarrow \, \tmu_0 (\sigma) \chi_I (s) & &
  \text{strongly in } L^2 ( \mathcal{M}, \Stwo^2 ) ,
  \label{eq:u0notsdep}\\
  \grad_{\omega}  \ue (\sigma, s) &  \, \rightharpoonup  \, \grad_{\omega} 
  \tmu_0 (\sigma) \chi_I (s) & & \text{weakly in } L^2 ( \mathcal{M},
  \RR^{2 \times 3} ), \\
  \varepsilon^{- 1} \ds  \ue (\sigma, s) &  \, \rightharpoonup  \,
  \tmmathbf{d}_0 (\sigma, s) & &\text{weakly in } L^2 ( \mathcal{M},
  \RR^3 ), \\
  \ds \ue (\sigma, s) &  \, \rightarrow  \, 0 & & \text{strongly in } L^2
  ( \mathcal{M}, \RR^3 ).\label{eq:ds} \qquad\qquad
\end{align}
Additionally, the following liminf inequality holds true
\begin{equation}
  \mathcal{F}_0 ( \tmu_0 ) \, \leqslant \, \liminf_{\varepsilon
  \rightarrow 0} \mathcal{F}_{\varepsilon} ( \ue ) .
  \label{eq:gammaliminf}
\end{equation}
\end{proposition}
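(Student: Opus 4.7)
The plan is to combine $H^1$-compactness from Proposition~\ref{prop:equicoercive} with the helical reformulation \eqref{eq:Functionalhel}, weak lower semicontinuity of $L^2$-norms squared, and the classical Gioia--James thin-film reduction of the magnetostatic energy \cite{GioiaJames97,carbou2001thin}. The hypothesis $\liminf_\varepsilon \mathcal{F}_\varepsilon(\ue)<\infty$ combined with \eqref{eq:ddf1}--\eqref{eq:ddf2} provides boundedness of $(\ue)$ in $H^1(\mathcal{M},\RR^3)$ and of $(\varepsilon^{-1}\ds\ue)$ in $L^2(\mathcal{M},\RR^3)$. Up to a nonrelabeled subsequence, Rellich's theorem gives $\ue\rightharpoonup \tmu_0$ in $H^1$ and strongly in $L^2$ (so $|\tmu_0|=1$ a.e.), while $\varepsilon^{-1}\ds\ue\rightharpoonup \tmmathbf{d}_0$ weakly in $L^2$. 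The identity $\|\ds\ue\|_{L^2}=\varepsilon\|\varepsilon^{-1}\ds\ue\|_{L^2}\to 0$ forces $\tmu_0$ to be $s$-independent, so $\tmu_0\in H^1(\omega,\Stwo^2)$. Differentiating the saturation constraint in $s$ gives $\varepsilon^{-1}\ds\ue\cdot\ue=0$, and weak-strong convergence of this product passes the constraint to the limit, yielding $\tmmathbf{d}_0\cdot\tmu_0=0$ a.e.

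For the liminf I would start from \eqref{eq:Functionalhel} and decompose $|\mathfrak{D}_\varepsilon\ue|^2$ column-wise. Each of the first two columns converges weakly in $L^2$ to $\partial_i\tmu_0-\kappa(e_i\times\tmu_0)$ and the third to $\tmmathbf{d}_0-\kappa(e_3\times\tmu_0)$, so weak lower semicontinuity of $\|\cdot\|_{L^2}^2$ applies to each. Discarding the non-negative third contribution (the field $\tmmathbf{d}_0$ never appears in $\mathcal{F}_0$), expanding the remaining two squares, and simplifying via $|\tmu_0|=1$, $(e_i\times\tmu_0)\cdot\partial_i\tmu_0 = -\tmu_0\cdot(e_i\times\partial_i\tmu_0)$ and $\sum_{i=1}^{2}|e_i\times\tmu_0|^2 = 1 + (\tmu_0\cdot e_3)^2$, one obtains
\[
\liminf_\varepsilon \tfrac{1}{2}\int_\mathcal{M}|\mathfrak{D}_\varepsilon\ue|^2\,\mathd x \,\geq\, \tfrac{1}{2}\int_\omega|\grad_\omega\tmu_0|^2\,\mathd\sigma + \kappa\int_\omega\curl_\omega\tmu_0\cdot\tmu_0\,\mathd\sigma + \tfrac{\kappa^2}{2}|\omega| + \tfrac{\kappa^2}{2}\int_\omega(\tmu_0\cdot e_3)^2\,\mathd\sigma.
\]

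The magnetostatic term I would handle by invoking the Gioia--James thin-film reduction: strong $L^2$-convergence of $\ue$ to an $s$-independent $\tmu_0$ implies $\tfrac{1}{2}\int_{\RR^3}|\Keps[\ue\chi_\mathcal{M}]|^2\,\mathd x \to \tfrac{1}{2}\int_\omega(\tmu_0\cdot e_3)^2\,\mathd\sigma$. Summing the two contributions and using \eqref{eq:Functionalhel} together with $|\mathcal{M}|=|\omega|$ to cancel the $\pm\kappa^2|\omega|$ constants recovers exactly $\mathcal{F}_0(\tmu_0)$. The main obstacle is this demag limit: since $\hd$ is nonlocal, $\|\Keps[\ue\chi_\mathcal{M}]\|_{L^2}^2$ has no direct weakly lower semicontinuous representation in the magnetization, so one has to exploit the thin-film geometry --- essentially, that the stray-field energy collapses to the squared $L^2$-norm of the normal jump of $\tmu_0$ across $\partial\mathcal{M}$ --- to identify the limit as the easy-plane shape anisotropy. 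Once this strong identification is available, the rest of the proof reduces to weak lower semicontinuity of convex quadratic forms and elementary vector algebra.
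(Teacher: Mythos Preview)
Your proposal is correct and follows essentially the same route as the paper: both arguments pass through the helical reformulation \eqref{eq:Functionalhel}, extract compactness from \eqref{eq:ddf1}--\eqref{eq:ddf2}, apply weak lower semicontinuity columnwise to $\mathfrak{D}_\varepsilon\ue$ (discarding the nonnegative third-column contribution), and handle the magnetostatic term via the strong convergence $\Keps[\ue\chi_{\mathcal{M}}]\to -\chi_{\mathcal{M}}(e_3\otimes e_3)\tmu_0$ in $L^2(\RR^3,\RR^3)$ from \cite[Lemma~2.1]{carbou2001thin}. The only cosmetic difference is that the paper records the helical form of $\mathcal{F}_0$ (eq.~\eqref{eq:gammalimithel}) and concludes ``at once'', whereas you expand $\sum_{i=1}^2|\held_i\tmu_0|^2$ explicitly; your identification of the demag limit as the delicate step is accurate, and in the paper it is resolved precisely by invoking that lemma rather than by reproving it.
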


\begin{proof}
Without loss of generality, we can assume that $\mathcal{F}_{\varepsilon} ( \ue ) \leqslant c_+<\infty$ for some constant $c_+ > 0$ and all $\varepsilon>0$. Given \eqref{eq:Functionalhel}, the estimates {\eqref{eq:ddf1}}--{\eqref{eq:ddf2}} give the uniform bound
\begin{equation}
  \sum_{i = 1}^2 \int_{\mathcal{M}} | \partial_i  \ue |^2\,\mathd x +
  \frac{1}{\varepsilon^2} \int_{\mathcal{M}} | \ds \ue
  |^2\,\mathd x +\| \Keps [ \ue \chi_{\mathcal{M}}] \|^2_{L^2 ( \RR^3, \RR^3 )}
\leqslant  \, c_{\kappa, \mathcal{M}}:= 2 (c_+ + \kappa^2 \vert \mathcal{M} \vert ) \,.
\end{equation}
Therefore, by weak compactness, we deduce the existence of $\tmu_0 \in H^1 ( \omega, \Stwo^2)$ and $ \tmmathbf{d}_0 \in L^2 ( \mathcal{M}, \RR^3)$ for which \eqref{eq:u0notsdep}--\eqref{eq:ds} hold true up to the extraction of a (not relabeled) subsequence.
In view of {\cite[Lemma~2.1]{carbou2001thin}}, there holds
\begin{equation}
  \Keps [ \ue \chi_{\mathcal{M}}]   \,\rightarrow  \, \tmmathbf{h}_0
  [ \tmu_0 ]  \quad \text{strongly in } L^2 (
  \RR^3, \RR^3 )  \label{eq:strayfieldlimit}
\end{equation}
with $\tmmathbf{h}_0 [ \tmu_0 ] \assign - \chi_{\mathcal{M}}  (e_3
\otimes e_3) \tmu_0$. Moreover, for $i = 1, 2$, we have that
\begin{align}
 \qquad\qquad\qquad \partial_i \ue - \kappa ( e_i \times \ue ) &  \,\rightharpoonup  \,\partial_i \tmu_0 - \kappa ( e_i \times \tmu_0 ) & &
  \text{weakly in } L^2 ( \mathcal{M}, \RR^3 ), 
  \label{eq:wconv1}\\
  \frac{1}{\varepsilon} \ds \ue - \kappa ( e_3 \times \ue ) &
  \, \rightharpoonup  \, \tmmathbf{d}_0 - \kappa ( e_3 \times \tmu_0
  ) & & \text{weakly in } L^2 ( \mathcal{M}, \RR^3 ). 
  \label{eq:wconv2}
\end{align}
As $\ue \rightarrow \tmu_0\chi_I $ strongly in $L^2 (
  \mathcal{M}, \Stwo^2 )$, we additionally obtain that
  \begin{equation}
  0 \eqs \varepsilon^{- 1}  \ds  \ue (\sigma, s) \cdot \ue
  (\sigma, s)  \; \rightharpoonup \; \tmmathbf{d}_0
  (\sigma, s) \cdot \tmu_0 (\sigma)
  \end{equation}
   weakly in $L^2 ( \mathcal{M}, \RR
  )$, and therefore $\tmmathbf{d}_0 (\sigma, s) \cdot \tmu_0 (\sigma)
  \eqs 0$ for almost every $(\sigma, s) \in \mathcal{M}$.
  
In terms of the helical derivatives \eqref{eq:helders}, the energy functional $\mathcal{F}_0$ reads as
\begin{equation}
  \mathcal{F}_0 ( \tmu_0 )=\frac{1}{2} \sum_{i = 1}^2
  \int_{\omega} | \held_i \tmu_0 (\sigma)  |^2 \mathd \sigma + \frac{1}{2} \int_{\omega}
  ( \tmu_0 (\sigma) \cdot e_3 )^2 \mathd \sigma  -\frac{\kappa^2}{2}|\omega|. 
  \label{eq:gammalimithel}
\end{equation}
Rewriting also the energy functional $\mathcal{F}_{\varepsilon}$ in terms of the helical derivatives
(see~\eqref{eq:Functionalhel}),
taking into account \eqref{eq:strayfieldlimit}--\eqref{eq:wconv2} and the weak lower semicontinuity of the norm,
the liminf inequality {\eqref{eq:gammaliminf}} follows at once.
\end{proof}

The next proposition shows that the lower bound identified in Proposition \ref{prop:liminf} is indeed optimal.

\begin{proposition}
\label{prop:limsup}
Let $\tmu_0 \in
H^1 ( \mathcal{M}, \Stwo^2 )$ be independent of the $s$-variable. Then, there exists a sequence $\{ \ues \}_{\varepsilon>0}\subseteq H^1(\mathcal{M}, \Stwo^2)$ such that $\ues \to \tmu_0$ strongly in $H^1 ( \mathcal{M}, \Stwo^2)$, and 
\begin{equation}
  \mathcal{F}_0 ( \tmu_0 ) = \, \lim_{\varepsilon
  \rightarrow 0} \mathcal{F}_{\varepsilon} ( \ues ) .
  \label{eq:limsup}
\end{equation}
\end{proposition}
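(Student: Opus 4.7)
My plan is to construct an explicit recovery sequence that cancels the residual produced by the naive ansatz $\ues\equiv\tmu_0$. Indeed, with that naive choice, the third rescaled helical derivative equals $\varepsilon^{-1}\partial_s\tmu_0-\kappa(e_3\times\tmu_0)=-\kappa(e_3\times\tmu_0)$, which contributes an additional $\frac{\kappa^2}{2}\int_\omega|e_3\times\tmu_0|^2\,\mathd\sigma$ in the limit of $\mathcal{F}_\varepsilon$ and hence fails \eqref{eq:limsup} unless $\tmu_0\parallel e_3$. The remedy is to twist $\tmu_0$ around the $e_3$-axis at the helical rate $\kappa$ along the thickness variable. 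Concretely, letting $R(\theta)\in SO(3)$ denote the rotation of $\RR^3$ about $e_3$ by the angle $\theta$, I set
\[
\ues(\sigma,s) \assign R(\varepsilon\kappa s)\,\tmu_0(\sigma).
\]
Since the infinitesimal generator of $R$ is $v\mapsto e_3\times v$, one has $\partial_s\ues = \varepsilon\kappa\,(e_3\times \ues)$, so that
\[
\varepsilon^{-1}\partial_s\ues - \kappa(e_3\times \ues) \,\equiv\, 0 \qquad\text{on }\mathcal{M},
\]
while the saturation constraint $|\ues|=1$ is automatically preserved by orthogonality of $R$.

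The strong convergence $\ues\to\tmu_0$ in $H^1(\mathcal{M},\Stwo^2)$ is then elementary: $R(\varepsilon\kappa s)\to I$ uniformly for $s\in[0,1]$, hence $\ues\to\tmu_0$ in $L^\infty(\mathcal{M},\RR^3)$; for $i=1,2$, $\partial_i\ues = R(\varepsilon\kappa s)\partial_i\tmu_0$ converges to $\partial_i\tmu_0$ in $L^2(\mathcal{M})$ by dominated convergence; and $\|\partial_s\ues\|_{L^\infty(\mathcal{M})}\leq \varepsilon|\kappa|\to 0$.

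For the energy limit, I would work directly with the helical decomposition \eqref{eq:Functionalhel} of $\mathcal{F}_\varepsilon(\ues)$. The third helical derivative contributes zero for every $\varepsilon>0$ by construction. For the tangential indices $i=1,2$, using the identity $e_i\times(Rv)=R(R^\top e_i\times v)$ together with $R(\varepsilon\kappa s)^\top e_i\to e_i$ uniformly, dominated convergence yields
\[
\sum_{i=1}^2\int_\mathcal{M}|\held_i\ues|^2\,\mathd x \;\longrightarrow\; \sum_{i=1}^2\int_\omega|\held_i\tmu_0|^2\,\mathd\sigma.
\]
For the magnetostatic contribution, the strong $L^2$-convergence $\ues\to\tmu_0$ combined with \cite[Lemma~2.1]{carbou2001thin} (already invoked in Proposition~\ref{prop:liminf}) gives $\Keps[\ues\chi_\mathcal{M}]\to -\chi_\mathcal{M}(e_3\otimes e_3)\tmu_0$ strongly in $L^2(\RR^3,\RR^3)$, whence $\tfrac{1}{2}\|\Keps[\ues\chi_\mathcal{M}]\|_{L^2(\RR^3)}^2\to \tfrac{1}{2}\int_\omega(\tmu_0\cdot e_3)^2\,\mathd\sigma$. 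Summing the three contributions and subtracting $\kappa^2|\mathcal{M}|=\kappa^2|\omega|$ reproduces the helical form \eqref{eq:gammalimithel} of $\mathcal{F}_0(\tmu_0)$, which proves \eqref{eq:limsup}. The only genuine insight required is the helical rotation ansatz; once it is in place, all passages to the limit are routine applications of dominated convergence and of the magnetostatic lemma.
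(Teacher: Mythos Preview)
Your proof is correct and rests on the same geometric insight as the paper's---twisting $\tmu_0$ about $e_3$ at the helical rate $\kappa$ along the thickness so that the third rescaled helical derivative vanishes---but the two constructions differ. The paper takes the first-order tangent perturbation $\tmu_0+\varepsilon s\kappa(e_3\times\tmu_0)$ and projects it back onto $\Stwo^2$ via the nearest-point map, whereas you use the exact rotation $R(\varepsilon\kappa s)\tmu_0$. Your choice is somewhat cleaner: the constraint $|\ues|=1$ is automatic, the identity $\varepsilon^{-1}\partial_s\ues-\kappa(e_3\times\ues)\equiv 0$ holds \emph{exactly} for every $\varepsilon$ (not merely in the limit), and the orthogonality of $R$ makes the tangential helical derivatives easy to control via $|\held_i\ues|=|\partial_i\tmu_0-\kappa(R^\top e_i\times\tmu_0)|$ together with $R^\top e_i\to e_i$ uniformly. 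The paper's projection ansatz, on the other hand, is the standard device when one only has access to a tangent-space correction and is perhaps more transparently connected to the structure uncovered in the liminf analysis (namely $\tmmathbf{d}_0=\kappa(e_3\times\tmu_0)$). Either route yields the strong $H^1$ convergence and the energy limit via dominated convergence and Carbou's magnetostatic lemma in exactly the same way.
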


\begin{proof}
For $\tmu_0 \in
H^1 ( \mathcal{M}, \Stwo^2 )$ independent of the $s$-variable, we
consider the nearest point projection on $\Stwo^2$ of a suitable perturbation
of $\tmu_0$ along the tangent space. Precisely, we set
\begin{equation} \label{eq:uesconstruction}
  \ues (\sigma, s) \assign \frac{\tmu_0 (\sigma) + \varepsilon s \kappa (
  e_3 \times \tmu_0 (\sigma) )}{| \tmu_0 (\sigma) + \varepsilon s
  \kappa ( e_3 \times \tmu_0 (\sigma) ) |} \quad\text{for almost every}\quad(\sigma,s)\in \mathcal{M}.
\end{equation}
We point out that $\ues$ is well defined since,
almost everywhere in $\mathcal{M}$,
$\tmu_0 \in \mathbb{S}^2$  and $(e_3 \times \tmu_0)\bot \tmu_0$
so that
$| \tmu_0 + s \varepsilon \kappa ( e_3 \times \tmu_0 ) |^2 \eqs 1 +
\varepsilon^2 s^2 \kappa^2 | e_3 \times \tmu_0 |^2 \geqslant 1$.
Note the pointwise convergence, $\ues\rightarrow \tmu_0$ as $\varepsilon \to 0$ for almost all $(\sigma,s)\in \mathcal{M}$. A direct computation shows that
\begin{eqnarray}
  \partial_1 \ues (\sigma, s) & \eqs & \frac{\ues \times ( (
  \partial_1 \tmu_0 + s \varepsilon \kappa e_3 \times \partial_1 \tmu_0
  ) \times \ues )}{| \tmu_0 + s \varepsilon
  \kappa ( e_3 \times \tmu_0 ) |} \;, \\[4pt]
  \partial_2 \ues (\sigma, s) & \eqs & \frac{\ues \times ( (
  \partial_2 \tmu_0 + s \varepsilon \kappa e_3 \times \partial_2 \tmu_0
  ) \times \ues )}{| \tmu_0 + s \varepsilon
  \kappa ( e_3 \times \tmu_0 ) |}, \\[4pt]
  \ds \ues (\sigma, s) & \eqs & \varepsilon \kappa \frac{\ues \times (
  ( e_3 \times \tmu_0 ) \times \ues )}{| \tmu_0 + s
  \varepsilon \kappa ( e_3 \times \tmu_0 ) |} . 
\end{eqnarray}
Hence, the dominated convergence theorem proves the following relations:
\begin{align}
  \qquad \qquad\qquad\ues (\sigma, s) &  \, \rightarrow  \, \tmu_0 (\sigma) \chi_I (s) &  \, 
 & \text{strongly in } L^2 ( \mathcal{M}, \Stwo^2 ), \\[3pt]
  \grad_{\omega}  \ues (\sigma, s) &  \, \to \, \grad_{\omega} 
  \tmu_0 (\sigma) \chi_I (s) &  \, & \text{strongly in } L^2 (
  \mathcal{M}, \RR^{2 \times 3} ),\qquad\\[3pt]
  \ds \ues (\sigma, s) &  \, \rightarrow  \, 0 &  \,  &\text{strongly in } L^2
  ( \mathcal{M}, \RR^3 ), \\[3pt]
  \varepsilon^{- 1} \ds  \ues (\sigma, s) &  \, \to  \, \kappa
  ( e_3 \times \tmu_0 ) & \, & \text{strongly in } L^2 (
  \mathcal{M}, \RR^3 ) .  \label{eq:weaktoke3m0}
\end{align}
In particular, we have that $\ues \rightarrow \tmu_0 \chi_I$
strongly in $H^1 ( \mathcal{M}, \Stwo^2 )$.  By \eqref{eq:Functionalhel}, \eqref{eq:gammalimithel}, and \cite[Lemma~2.1]{carbou2001thin}, we conclude that $\lim_{\varepsilon
\rightarrow 0} \mathcal{F}_{\varepsilon} ( \ues ) =\mathcal{F}_0
( \tmu_0 )$. This completes the proof of \eqref{eq:limsup} and of the proposition.
\end{proof}

We are finally in a position to prove Theorem \ref{thm:main1}.

\begin{proof}[Proof of Theorem \ref{thm:main1}]
The $\Gamma$-convergence statement of Theorem~\ref{thm:main1} follows by combining Propositions \ref{prop:liminf} and \ref{prop:limsup}.
It remains to prove that if the maps $\ue \in H^1(\mathcal{M}, \Stwo^2)$ are minimizers of $\mathcal{F}_{\varepsilon}$ then, upon possible extraction of a subsequence, $(\ue)_{\varepsilon>0}$ converges even strongly in $H^1(\mathcal{M}, \Stwo^2)$ to a minimizer of $\mathcal{F}_0$.

The convergence of $\ue$, weakly in $H^1(\mathcal{M}, \Stwo^2)$,  to a minimum point of $\mathcal{F}_0$ is a consequence of the equicoercivity of $\mathcal{F}_{\varepsilon}$ proved in Proposition \ref{prop:equicoercive}. Indeed, equicoercivity assures the validity of the fundamental theorem of $\Gamma$-convergence concerning the variational convergence of minimum problems ({\tmabbr{cf.}}
\cite{braides1998homogenization,dal1993introduction}).
Thus, we  only have to prove that the convergence to a minimum point is strong in $H^1(\mathcal{M}, \Stwo^2)$.
 
To this end we observe that, by assumptions, there exists $\tmu_0\in H^1(\mathcal{M}, \Stwo^2)$, not depending on the $s$ variable, such that $\ue \rightharpoonup \tmu_0$ weakly in $H^1(\mathcal{M}, \Stwo^2)$ and $\ds \ue (\sigma, s)   \to 0$ strongly in $L^2
  ( \mathcal{M}, \RR^3 )$. By the lower semicontinuity of the norm, and the strong convergence of $ \ue\to \tmu_0$ in $L^2( \mathcal{M}, \RR^3 )$, we get
\begin{align}
\mathcal{F}_0(\tmu_0) &\leqslant \liminf_{\varepsilon\to 0} \left\{\frac{1}{2} \int_{\mathcal{M}} |
  \nabla \ue |^2 \mathd x \notag  + \kappa
  \int_{\omega} \curl_{\omega}  \tmu_0 \cdot \tmu_0\,\mathd\sigma + \left(\frac{1 + \kappa^2}{2}\right)
  \int_{\omega} ( \tmu_0 (\sigma) \cdot e_3 )^2 \mathd \sigma-\frac{\kappa^2}{2}|\omega|\right\} \notag\\[4pt]
   &\leqslant \limsup_{\varepsilon\to 0} \left\{\frac{1}{2} \int_{\mathcal{M}} |
  \nabla \ue |^2 \mathd x \notag + \kappa
  \int_{\omega} \curl_{\omega}  \tmu_0 \cdot \tmu_0\mathd\sigma + \left(\frac{1 + \kappa^2}{2}\right)
  \int_{\omega} ( \tmu_0 (\sigma) \cdot e_3 )^2 \mathd \sigma-\frac{\kappa^2}{2}|\omega|\right\} \notag\\[4pt]
  & \eqs \limsup_{\varepsilon\to 0} \mathcal{F}_{\varepsilon}(\ue) \, \leqslant \, \lim_{\varepsilon\to 0} \mathcal{F}_{\varepsilon}(\ues) \eqs \mathcal{F}_0(\tmu_0),
\end{align}
where $\ues$ denotes the family  built from $u_0$  as in \eqref{eq:uesconstruction}. Note that, the last inequality is a consequence of the minimality of $\ue$, while the last equality is nothing but \eqref{eq:limsup}. Overall, we  conclude that 
\begin{equation}
\mathcal{F}_0(\tmu_0)=  \lim_{\varepsilon\to 0} \frac{1}{2} \int_{\mathcal{M}} |
  \nabla \ue |^2 \mathd x + \kappa
  \int_{\omega} \curl_{\omega}  \tmu_0 \cdot \tmu_0\mathd \sigma + \left(\frac{1 + \kappa^2}{2}\right)
  \int_{\omega} ( \tmu_0 (\sigma) \cdot e_3 )^2 \mathd \sigma-\frac{\kappa^2}{2}|\omega|, 
\end{equation}
from which  we deduce  the convergence of the norms $\|\ue\|_{H^1(\mathcal{M}, \Stwo^2)}\to \|\tmu_0\|_{H^1(\mathcal{M}, \Stwo^2)}$. Since $\ue \rightharpoonup \tmu_0$ weakly in $H^1(\mathcal{M}, \Stwo^2)$ we conclude that $\ue \to \tmu_0$ strongly in $H^1(\mathcal{M}, \Stwo^2)$.
\end{proof}

\section{The time-dependent case: Proof of Theorem \ref{thm:main2}} \label{sec:LLG}

The observable states of the magnetization at equilibrium correspond to the minimizers of the
micromagnetic energy functional {\eqref{eq:GLunormfull}} and they are among
the solutions of the weak Euler--Lagrange equation. Precisely, if $\m \in H^1
( \Omega, \Stwo^2 )$ is a metastable equilibrium state of
$\mathcal{G}_{\Omega}$, then
\begin{equation}
  \langle d\mathcal{G}_{\Omega} ( \m ), \tmmathbf{v}
  \rangle \eqs 0 \label{eq:BrSta}
\end{equation}
for every $\tmmathbf{v} \in H^1 ( \Omega, \RR^3 )$ such that
$\m\cdot \tmmathbf{v}=0$ a.e.~in $\Omega$, where $d\mathcal{G}_{\Omega}
: H^1 ( \Omega, \RR^3 ) \rightarrow \RR$ is the (unconstrained)
Frechet differential of $\mathcal{G}_{\Omega}$ at $\m$. Recall that $\heff [ \m
]=- d\mathcal{G}_{\Omega} ( \m )$.
Taking into account {\eqref{eq:helicalder}} with $\mathfrak{D}$ given by {\eqref{eq:jachelic}} for $\varepsilon =
1$, a simple computation
reveals that
\begin{equation}
  - \langle \heff [ \m ], \tmmathbf{\varphi} \rangle
  \eqs \int_{\Omega} \mathfrak{D} \m  \,\Fsp \,\mathfrak{D}\tmmathbf{\varphi}\,\mathd x-
  \int_{\Omega} \hd [\tmmathbf{m}\chi_{\mathcal{M}}] \cdot \tmmathbf{\varphi}\,\mathd x \quad\text{for all $\tmmathbf{\varphi} \in H^1 ( \Omega, \RR^3)$}\, .\label{eq:ELCc}
\end{equation}
 In particular, when $\m \in C^2 ( \bar{\Omega}, \Stwo^2 )$, we
can integrate by parts in {\eqref{eq:BrSta}} to obtain the strong form of Brown's static equation
\begin{equation}
\begin{cases}
    \m \times \heff [ \m ] \eqs 0 & \text{in } \Omega,\\
    \partial_{\tmmathbf{n}}  \m + \kappa ( {\m \times \tmmathbf{n}} )
    \eqs 0 & \text{on } \partial \Omega,
  \end{cases}
\end{equation}
where the effective field has the form $\heff [ \m ] \assign \Delta
\m - 2 \kappa^2 \m - 2 \kappa \curl  \m + \hd [ \m ]$, and
$\tmmathbf{n}$ is the outer normal unit vector field to $\partial \Omega$.

When the magnetization $\m$ does not satisfy Brown's static equation, the
ferromagnetic system is in a nonequilibrium state, and it evolves in time
according to the LLG equation
{\cite{LandauA1935,gilbert2004phenomenological}}, which, from the
phenomenological point of view, describes the magnetization dynamics as a
dissipative precession driven by the effective field. In strong form, the LLG equation reads as
\begin{equation}
 \begin{cases}
    \partial_t \m  \eqs  - \m \times \heff [ \m ] + \alpha \m
    \times \partial_t \m  & \text{in } \Omega \times \RR_+,\\
    \partial_{\tmmathbf{n}} \m  \eqs  - \kappa \m \times \tmmathbf{n} &
    \text{on } \partial \Omega \times \RR_+,\\
    \m (0) \eqs  \m^0 & \text{in } \Omega,
  \end{cases} \label{eq:LLGstrong}
\end{equation}
with $\alpha>0$ being the dimensionless Gilbert damping factor.

We recall below  the standard
weak formulation as described, {\tmabbr{e.g.}}, in
{\cite{alouges1992global,carbou2001thin,hrkac2017convergent}}. 

\begin{definition}
  Let $\Omega$ be an open set of $\RR^3$ and $\m^0 \in H^1 (
  \Omega, \Stwo^2 )$. For every $T >0$ we set $\Omega_T \assign
  (0, T) \times \Omega$. We say that the vector field $\m \in L^\infty(\RR_+;H^1 (\Omega , \Stwo^2))$ is a (global) weak solution of the LLG equation
  {\eqref{eq:LLGstrong}}, if the following
  conditions are fulfilled:
  \begin{itemize}
    \item For every $T>0$, the vector field $\m$ is in $H^1 ( \Omega_T, \Stwo^2 )$, and $\m (0) = \m^0$ in the trace sense.
    \item For every $T>0$ and every $\tmmathbf{\varphi} \in H^1 ( \Omega_T,^{} \RR^3
    )$, there holds
    \begin{equation}
      \int_0^T \duall{ \partial_t \m}{\tmmathbf{\varphi}}\,\mathd t \eqs  - \int_0^T
      \duall{\heff [ \m ]}{\tmmathbf{\varphi} \times \m}\,\mathd t + \alpha
      \int_0^T \duall{\partial_t \m}{\tmmathbf{\varphi} \times \m}\,\mathd t, 
      \label{eq:LLGweakform}
    \end{equation}
    where $\dual{\cdot}{\cdot}$ denotes the duality pairing associated with $H^1(\Omega,\RR^3)$.
    \item For almost all $T>0$, the following energy inequality holds:
    \begin{equation}
      \mathcal{G}_{\Omega} ( \m (T) ) + \alpha \int_0^T \|
      \partial_t \m (t) \|_{L^2 ( \Omega, \RR^3 )}^2 \, \mathd t
      \, \leqslant \, \mathcal{G}_{\Omega} ( \m^0 ) .
      \label{eq:energyinequality}
    \end{equation}
  \end{itemize}
\end{definition}

The evolution equation {\eqref{eq:LLGweakform}}  corresponds to  a weak formulation
of {\eqref{eq:LLGstrong}} in the space-time domain. The boundary conditions in
{\eqref{eq:LLGstrong}} are here enforced as natural boundary conditions.
Finally, the energy inequality {\eqref{eq:energyinequality}} is the weak
counterpart of the Gilbert dissipative energy law
\begin{equation}
  \partial_t \mathcal{G}_{\Omega} ( \m (t) ) \eqs - \alpha \|
  \partial_t \m (t) \|_{L^2 ( \Omega, \RR^3 )}^2 \leqslant 0,
\end{equation}
which is valid for  every  $t >0$ under suitable regularity assumptions on
the solution of {\eqref{eq:LLGstrong}}.

The existence of a global weak solution of {\eqref{eq:LLGstrong}} has recently
been proved in {\cite[Theorem~3.4]{hrkac2017convergent}}. In particular, in
our setting, for any $\varepsilon > 0$ there exists at least a solution
$\m_{\varepsilon}$ of the LLG equation on $\Omega_{\varepsilon}$ with initial
datum $\m^0_{^{}}$.

As in the proof of the static $\Gamma$-convergence result, we first proceed by
performing a space rescaling, and by rewriting the LLG equation on a fixed cylindrical
domain $\mathcal{M}= \omega \times I$, with $\omega$ being an open bounded domain in $\RR^2$ with Lipschitz boundary
and $I \assign (0, 1) \subseteq \RR$. First, for any $\ue \in H^1 (
\mathcal{M}, \Stwo^2 )$, we introduce the $\varepsilon$-rescaled
{\tmem{effective}} field, defined for every $\tmmathbf{\varphi} \in H^1 (
\mathcal{M}, \RR^3 )$ by
\begin{equation}
  - \langle \heffe [ \ue ], \tmmathbf{\varphi} \rangle 
  \eqs  \int_{\mathcal{M}} \mathfrak{D}_{\varepsilon}  \ue  \,\Fsp\,
  \mathfrak{D}_{\varepsilon} \tmmathbf{\varphi} \,\mathd x\, - \int_{\mathcal{M}} \Keps
  [ \ue \chi_{\mathcal{M}}] \cdot \tmmathbf{\varphi}\,\mathd x.  \label{eq:heffeps}
\end{equation}
We proceed by defining the rescaled weak solutions of the LLG equation.

\begin{definition}
\label{def:rescaled-wk-sol}
Let $\tmu^0(\sigma,s):=\tmu^0(\sigma)\in H^1(\omega,\Stwo^2)$ be independent of  the $s$-variable.
For every $T >0$ we set $\mathcal{M}_T \assign (0, T) \times \mathcal{M}$.
We say that the map $\ue \in L^\infty(\RR_+;H^1 (\mathcal{M} , \Stwo^2))$ is a
(global) weak solution (at scale $\varepsilon$) to the LLG equation
if the following conditions are fulfilled:
  \begin{itemize}
    \item For every $T > 0$, the vector field $\ue$ is in $ H^1 ( \mathcal{M}_T, \Stwo^2 )$, and $\ue (0) = \tmu^0_{^{}}$ in the trace sense.
    \item For every $T > 0$ and for every $\tmmathbf{\varphi} \in H^1 ( \mathcal{M}_T, \RR^3
    )$, there holds
    \begin{equation}
      \int_0^T \duall{\partial_t \ue}{\tmmathbf{\varphi}} \,\mathd t\, \eqs\, - \int_0^T
      \duall{\heffe [ \ue ]}{\tmmathbf{\varphi} \times \ue}\,\mathd t\, + \alpha
      \int_0^T \duall{\partial_t \ue}{\tmmathbf{\varphi} \times \ue}\,\mathd t , 
      \label{eq:LLGweakformrescaled}
    \end{equation}
    where $\dual{\cdot}{\cdot}$ denotes the duality pair in $H^{-1}(\mathcal{M},\RR^3) \times  H^1(\mathcal{M},\RR^3)$.
\item For almost every $T>0$, the following energy inequality holds:
    \begin{equation}
      \mathcal{F}_{\varepsilon} ( \ue (T) ) + \alpha \int_{0}^T \|
      \partial_t \ue (t) \|_{L^2 ( \mathcal{M}, \RR^3)}^2
      \,\mathd t \,\leqslant \mathcal{F}_{\varepsilon} ( \tmu^0_{^{}} )
      \label{eq:energyinequalityrescaled}\, ,
     \end{equation}
    with $\mathcal{F}_\varepsilon$ given by \eqref{eq:Functional}.
  \end{itemize}
\end{definition}

We are now in a position to prove Theorem \ref{thm:main2}. 

 \begin{proof}[Proof of Theorem \ref{thm:main2}]
Let $(\ue)_{\varepsilon}$ be a sequence of rescaled weak
solutions to the LLG equation, in the sense of Definition \ref{def:rescaled-wk-sol}.
By assumption, $\ue (0; \sigma,s) = \tmu^0(\sigma) \chi_I (s)$ for some
$\tmu^0 \in H^1 ( \omega, \Stwo^2 )$.
We first observe that, in view of \eqref{eq:hdrels}, there exists a positive
constant $\kappa_{\omega}$, depending only on the measure of $\mathcal{\omega}$
and on the DMI constant, such that, for every $\varepsilon > 0$, it holds that
\begin{equation}
  \mathcal{F}_{\varepsilon} ( \tmu^0 ) \;
  \leqslant \; \frac{1}{2} |\omega|+ \kappa_{\mathcal{\omega}}^2  \| \grad_{\omega} \tmu^0
  \|^2_{L^2 ( \omega, \RR^{2 \times 3} )} .
\end{equation}
Thus, due to the energy inequality {\eqref{eq:energyinequalityrescaled}}, there exists a positive constant $c_0$, depending only on the initial datum
$\tmu^0$, such that ${\textrm{ess sup}}_{t\in[0,T]} | \mathcal{F}_{\varepsilon}( \tmu_\varepsilon(t)| \leqslant c_0$ and $\|
      \partial_t \ue  \|^2_{ L^2 (\RR_+\times\mathcal{M}, \RR^3))} \leqslant c_0$ for every $\varepsilon > 0$ and for all $T>0$. In particular, the following uniform bounds hold:
\begin{align}
\qquad &  \| \nabla_{\omega} \ue \|_{L^{\infty} ( \RR_+; L^2 ( \mathcal{M},
  \RR^{2 \times 3} ) )}   \leqslant c_0,  
 & \frac{1}{\varepsilon} \| \partial_s \ue \|_{ L^{\infty} ( \RR_+ ; L^2(
  \mathcal{M},\RR^3) )}   \leqslant c_0, \qquad\qquad   \\[3pt]
 & \| \partial_t  \ue \|_{L^2 ( \RR_+ \times \mathcal{M},\RR^3 )}
  \leqslant c_0, \qquad\qquad &\| \Keps [ \ue ] \|_{L^{\infty} (
  \RR_+; L^2 ( \RR^3, \RR^3 ) )} \leqslant c_0. \qquad
  \end{align}
Hence, by weak compactness, there exist maps
$\tmmathbf{d}_0 \in
L^\infty ( \RR_+; L^2 ( \mathcal{M},\RR^3 ) )$, $\tmu_0 \in L^{\infty}
( \RR_+; H^1 ( \mathcal{M}, \Stwo^2 ))$, with $\tmu_0\in$ independent of the $s$ variable and belonging to $H^1(\omega_T,\Stwo^2)$ for every $T>0$,
such that, up to the extraction
of a (not relabeled) subsequence, the following convergence relations hold
true:
\begin{align}
  \qquad\qquad\qquad \ue & \overset{\ast}{\rightharpoonup}   \tmu_0
 & & \text{ weakly* in } L^{\infty} ( \RR_+; H^1 (
  \mathcal{M}, \RR^3 ) ),  \label{eq:m-ep-wk-time}\\[3pt]
  \Keps [ \ue ] & \overset{\ast}{\rightharpoonup} 
 -   (e_3 \otimes e_3) \tmu_0 \chi_{\mathcal{M}} & & \text{
  weakly* in } L^{\infty} ( \RR_+; L^2 ( \RR^3, \RR^3 )
  ),  \qquad\qquad \label{eq:K-ep-time}\\[3pt]
  \nabla_{\omega} \ue &\rightharpoonup 
  \nabla_{\omega} \tmu_0  & & \text{ weakly in } L^2 ( \mathcal{M}_T, \RR^{2 \times 3} ), 
  \label{eq:nabla-m-ep-wk-time}\\[3pt]
  \frac{1}{\varepsilon} \ds \ue & \rightharpoonup\tmmathbf{d}_0 & &\text{ weakly in } L^2 ( \mathcal{M}_T, \RR^{2 \times 3} ),  \label{eq:ds-m-ep-wk-time}\\[3pt]
  \partial_t \ue & \rightharpoonup \partial_t
  \tmu_0 &  & \text{ weakly in } L^2 ( \RR_+ \times \mathcal{M}, \RR^3
  ).  \label{eq:dt-m-ep-wk-time}
\end{align}
Here, we set $\omega_T:=\omega\times (0,T)$ and $\mathcal{M}_T:=\mathcal{M}\times(0,T)$.
Note that, {\eqref{eq:K-ep-time}} follows by {\cite[Lemma~2.1]{carbou2001thin}}, while the
fact that $\tmu_0$ is independent of the $s$-variable is a consequence of
{\eqref{eq:ds-m-ep-wk-time}}. Additionally, the Aubin-Lions lemma proves that
\begin{equation}
\ue  \rightarrow \tmu_0 \quad \text{strongly in } C^0(0,T;L^2 (\mathcal{M}, \Stwo^2) )  \label{eq:strong-m-ep}
\end{equation}
for every $T \in \RR_+$. This yields $\tmu_0 (t, \sigma) \in \Stwo^2$ for {\tmabbr{a.e.}} $(t,\sigma) \in \RR_+\times \omega$ and $\tmu_0 (0) = \tmu^0$.

Next, taking into account the saturation constraint $| \ue | = 1$
in $\mathcal{M}_T$, we infer, after some  direct  computations, that  for every $\tmmathbf{\varphi} \in
H^1 ( \mathcal{M}_T, \RR^3 )$ 
\begin{align}
  \mathfrak{D}_{\varepsilon}  \ue  \,\Fsp\, \mathfrak{D}_{\varepsilon} (
  \tmmathbf{\varphi} \times \ue ) & \eqs  \sum_{i=1}^3 \ue \times
  ( \partial^{\varepsilon}_i \ue - \kappa ( e_i \times \ue )
  ) \cdot \partial^{\varepsilon}_i \tmmathbf{\varphi}
  \nonumber\\
  &   \quad \quad \quad \quad \quad \quad + \kappa \sum_{i=1}^3 
  \{ ( \partial_i^{\varepsilon} \ue \cdot e_i ) \ue - (
  \ue \cdot e_i ) \partial^{\varepsilon}_i \ue \} \cdot
  \tmmathbf{\varphi},  \label{eq:limitllgexasym}
\end{align}
where $\partial_i^{\varepsilon} \assign \partial_i$ if $i = 1, 2$
and $\partial_3^{\varepsilon} \assign \varepsilon^{-1} \ds$.
Thus, multiplying both sides of
the previous relation by $\varepsilon$ and taking into account
{\eqref{eq:m-ep-wk-time}}--{\eqref{eq:strong-m-ep}}, we obtain that for
every $\tmmathbf{\varphi} \in H^1 ( \mathcal{M}_T,^{} \RR^3 )$
\begin{equation}
  \varepsilon \int_{\mathcal{M}_T} \mathfrak{D}_{\varepsilon}  \ue  \,\Fsp\,
  \mathfrak{D}_{\varepsilon} ( \tmmathbf{\varphi} \times \ue ) \,\mathd x\,\mathd t\,
  \xrightarrow{\varepsilon \rightarrow 0} \, \int_{\mathcal{M}_T} ( \tmu_0
  \times ( \tmmathbf{d}_0 - \kappa ( e_3 \times \tmu_0 )
  ) ) \cdot \ds \tmmathbf{\varphi}\,\mathd x\,\mathd t. 
\end{equation}
In particular, since $\ue$ is a weak solution of the LLG equation
{\eqref{eq:LLGweakformrescaled}}, multiplying both sides of
{\eqref{eq:LLGweakformrescaled}} by $\varepsilon$, taking into account
{\eqref{eq:m-ep-wk-time}}--{\eqref{eq:strong-m-ep}}, and then passing to
the limit for $\varepsilon \rightarrow 0$, we infer that
\begin{equation}
  \int_{\mathcal{M}_T} ( \tmu_0 \times ( \tmmathbf{d}_0 - \kappa
  ( e_3 \times \tmu_0 ) ) ) \cdot \ds
  \tmmathbf{\varphi}\,\mathd x\,\mathd t \eqs 0 \quad \forall \tmmathbf{\varphi} \in H^1 (
  \mathcal{M}_T,^{} \RR^3 ) \label{eq:d0equal} .
\end{equation}
Therefore, $\tmu_0 \times ( \tmmathbf{d}_0 - \kappa ( e_3 \times
\tmu_0 ) )$ is independent of the $s$-variable. Moreover, testing
{\eqref{eq:d0equal}} against functions of the form $\tmmathbf{\varphi} \assign
s\tmmathbf{\psi}$, with $\tmmathbf{\psi} \in H^1 ( \omega_T,^{} \RR^3
)$, we get, for {\tmabbr{a.e.}} $t \in [0, T]$, that
\begin{equation}
  \tmu_0 (t ; \sigma) \times ( \tmmathbf{d}_0 (t ; \sigma, s) - \kappa
  ( e_3 \times \tmu_0 (\sigma) ) \eqs 0 \quad \text{for
  {\tmabbr{a.e.}} } (\sigma, s) \in \mathcal{M}. \label{eq:rot-v-null}
\end{equation}
On the other hand, by the saturation constraint, we know that $0 =
\varepsilon^{- 1} \partial_s \ue \cdot \ue$ for every $\varepsilon > 0$. Thus,
by {\eqref{eq:ds-m-ep-wk-time}} and {\eqref{eq:strong-m-ep}}, we obtain that
\begin{equation}
  \tmu_0 \cdot \tmmathbf{d_0} = 0. \label{eq:scal-null}
\end{equation}
By combining {\eqref{eq:rot-v-null}} and {\eqref{eq:scal-null}} we conclude
that $\tmmathbf{d}_0$ is independent of the $s$-variable as well and, in fact,
\begin{equation}
  \tmmathbf{d_{0 \tmmathbf{}}} = \kappa ( e_3 \times \tmu_0 ) .
  \label{eq:d0-m0}
\end{equation}
Testing~\eqref{eq:limitllgexasym} with
$\tmmathbf{\varphi} \assign \tmmathbf{\psi} \chi_I$ where $\tmmathbf{\psi} \in
H^1 ( \omega_T,^{} \RR^3 )$, taking into account
{\eqref{eq:strong-m-ep}}, {\eqref{eq:nabla-m-ep-wk-time}}, and
{\eqref{eq:ds-m-ep-wk-time}}, and passing to the limit for $\varepsilon
\rightarrow 0$ we have
\begin{align*}
  \int_{\mathcal{M}_T} \mathfrak{D}_{\varepsilon}  \ue  \,\Fsp\,
  \mathfrak{D}_{\varepsilon} ( \tmmathbf{\varphi} \times \ue )\,\mathd x\,\mathd t 
  \xrightarrow{\varepsilon \rightarrow 0} &  \sum_{i=1}^2
  \int_{\omega_T} ( \tmu_0 \times \held_i \tmu_0 ) \cdot \partial_i
  \tmmathbf{\psi}\,\mathd \sigma\,\mathd t \\
  &   + \kappa \sum_{i=1}^2 \int_{\omega_T} \{ ( \partial_i
  \tmu_0 \cdot e_i ) \tmu_0 - ( \tmu_0 \cdot e_i ) \partial_i
  \tmu_0 \} \cdot \tmmathbf{\psi}\,\mathd \sigma\,\mathd t \\
  &   + \kappa \int_{\omega_T} \{ (\tmmathbf{d}_0 \cdot e_3) \tmu_0 -
  ( \tmu_0 \cdot e_3 ) \tmmathbf{d}_0 \} \cdot
  \tmmathbf{\psi}\,\mathd \sigma\,\mathd t. 
\end{align*}
In view of {\eqref{eq:d0-m0}}, we deduce that
\begin{align}
  \int_{\mathcal{M}_T} \mathfrak{D}_{\varepsilon}  \ue \, \Fsp\,
  \mathfrak{D}_{\varepsilon} ( \tmmathbf{\varphi} \times \ue )\,\mathd x\,\mathd t 
  \xrightarrow{\varepsilon \rightarrow 0} & \sum_{i=1}^2 \int_{\omega_T}
  \tmu_0 \times \held_i \tmu_0 \cdot \partial_i \tmmathbf{\psi}\,\mathd \sigma\,\mathd t \nonumber\\
  &   + \kappa \sum_{i=1}^2 \int_{\omega_T} \{
  ( \partial_i \tmu_0 \cdot e_i ) \tmu_0 - ( \tmu_0 \cdot e_i
  ) \partial_i \tmu_0 \} \cdot \tmmathbf{\psi}\,\mathd \sigma\,\mathd t \nonumber\\
  &  + \kappa^2
  \int_{\omega_T} ( \tmu_0 \times ( \tmu_0 \cdot e_3 ) e_3
  ) \cdot \tmmathbf{\psi}\,\mathd \sigma\,\mathd t \, . 
\end{align}
Next, we observe that, due to {\eqref{eq:K-ep-time}}, it holds
\begin{equation}
  - \int_{\mathcal{M}_T} \Keps [ \ue ] \cdot (
  \tmmathbf{\varphi} \times \ue )\,\mathd x\,\mathd t \xrightarrow{\varepsilon \rightarrow
  0} \int_{\omega_T} ( \tmu_0 (\sigma) \cdot e_3 ) ( \tmu_0
  \times e_3 ) \cdot \tmmathbf{\psi}\,\mathd \sigma\,\mathd t.
\end{equation}
Hence, passing to the limit in~\eqref{eq:LLGweakformrescaled},
we end up with the weak formulation of the
limiting LLG equation \eqref{eq:LLGlimitstrong} which reads, for every $\tmmathbf{\psi} \in H^1 (
\omega_T,^{} \RR^3 )$, as
\begin{equation}
  \int_0^T \duall{\partial_t \tmu_0}{\tmmathbf{\psi}}\,\mathd t  \eqs  - \int_0^T
  \duall{\heffzero [ \tmu_0 ]}{\tmmathbf{\psi} \times \tmu_0}\,\mathd t +
  \alpha \int_0^T \duall{\partial_t \tmu_0}{\tmmathbf{\psi} \times \tmu_0}\,\mathd t,
  \label{eq:LLGweakformrescaledlimit}
\end{equation}
where ({\tmabbr{cf.}} {\eqref{eq:heffeps}})
\begin{align}
  - \int_0^T \duall{\heffzero [ \tmu_0 ]}{\tmmathbf{\psi} \times
  \tmu_0}\,\mathd t & \eqs \sum_{i=1}^2 \int_{\omega_T} ( \tmu_0 \times
  \held_i \tmu_0 ) \cdot \partial_i \tmmathbf{\psi}\,\mathd \sigma\,\mathd t  \nonumber\\
  &  + (1 + \kappa^2)
  \int_{\omega_T} ( \tmu_0 \times ( \tmu_0 \cdot e_3 ) e_3
  ) \cdot \tmmathbf{\psi}\,\mathd \sigma\,\mathd t \nonumber\\
  &  + \kappa \sum_{i=1}^2 \int_{\omega_T} \{
  ( \partial_i \tmu_0 \cdot e_i ) \tmu_0 - ( \tmu_0 \cdot e_i
  ) \partial_i \tmu_0 \} \cdot \tmmathbf{\psi}\,\mathd \sigma\,\mathd t. 
\end{align}
This concludes the proof of \eqref{eq:LLGlimitstrong}.
To complete the proof of Theorem \ref{thm:main2} it remains to show \eqref{eq:en-in}.
To this end, we first observe that, since $\tmu_0\in L^{\infty} ( \RR_+; H^1 (\mathcal{M}, \RR^3 ) )$,
the function $t\mapsto \mathcal{F}_0(\tmu_0(t))$ is in the space $L^1(\RR_+)$, and thus almost every $t\in \RR_+$ belongs to the set $\mathcal{L}_0$ of its Lebesgue points. We claim that \eqref{eq:en-in} holds for every $t\in \mathcal{L}_0$. Indeed, fix $t\in \mathcal{L}_0$ and $\delta>0$.  Integrating \eqref{eq:energyinequalityrescaled} in time, for every $\varepsilon>0$ we deduce the inequality
  \begin{equation*}
      \frac1\delta\int_{t-\delta}^{t+\delta}\mathcal{F}_{\varepsilon} ( \ue (s) )\,\mathd s + \frac{\alpha}{\delta}\int_{t-\delta}^{t+\delta} \int_{0}^s \|
      \partial_t \ue (r) \|_{L^2 ( \mathcal{M}, \RR^3)}^2\,\mathd r\,\mathd s
      \leqslant \mathcal{F}_{\varepsilon} ( \tmu^0_{^{}} ).
           \end{equation*}     
Owing to~\eqref{eq:m-ep-wk-time}--\eqref{eq:strong-m-ep}, Fatou's lemma,
and {\cite[Lemma~2.1]{carbou2001thin}}, we obtain that
      \begin{equation*}
      \frac1\delta\int_{t-\delta}^{t+\delta}\mathcal{F}_{0} ( \tmu_0 (s) )\,\mathd s + \frac{\alpha}{\delta}\int_{t-\delta}^{t+\delta} \int_{0}^s \|
      \partial_t \tmu_0 (r) \|_{L^2 ( \mathcal{M}, \RR^3)}^2\,\mathd r\,\mathd s
      \leqslant \mathcal{F}_{0} ( \tmu^0_{^{}} ).
     \end{equation*}
Since $t$ is a Lebesgue point for $t\mapsto \mathcal{F}_0(\tmu_0(t))$,
property \eqref{eq:en-in} follows in the limit $\delta\to 0$.
\end{proof}

\clearpage

\section{Finite element discretization}
\label{sec:num}

In this section, we present a finite element method for the numerical approximation of the LLG equation
\begin{equation}
\label{eq:LLG-num}
\partial_t \m 
=
- \m \times \heff [ \m ]
+ \alpha \, \m \times \partial_t \m
\quad \text{in } U \times \RR_+,
\end{equation}
where $U \subset \RR^d$ ($d=2,3$) is a bounded Lipschitz domain
with polytopal boundary,
$\alpha>0$,
and the effective field $\heff [ \m ] = - d \mathcal{G}_U[\m]$
is obtained from the energy functional
\begin{equation} \label{eq:functional-num}
\mathcal{G}_U(\m)
= \frac{1}{2} \int_U \vert \nabla \m \vert^2
+ \kappa \int_U \curl\m \cdot \m
- \frac{1}{2} \int_U \pi[\m] \cdot \m
- \int_U \f \cdot \m.
\end{equation}
Here,
$\nabla(\cdot)$ and $\curl(\cdot)$ denote
$d$-dimensional realizations of the gradient and curl operators
(see Remark~\ref{rem:curl2D} for the definition of the 2D curl),
$\kappa \in \RR$,
$\f \in L^2(U,\RR^3)$,
and
$\pi : L^2(U,\RR^3) \to L^2(U,\RR^3)$ is a linear, bounded, and self-adjoint operator.
Note that this setting covers both the 3D model
(see~\eqref{eq:GLunormfull} and~\eqref{eq:LLGstrong},
where $U = \Omega \subset \RR^3$, $\pi[\m] = \hd [\m \chi_{\Omega} ]$, and $\f \equiv \tmmathbf{0}$)
and the 2D reduced model
(see~\eqref{eq:gammalimit} and~\eqref{eq:LLGlimitstrong},
where $U = \omega \subset \RR^2$, $\pi[\m] = - (1 + \kappa^2) (e_3 \otimes e_3) \m$, and $\f \equiv \tmmathbf{0}$).
To complete the setting, \eqref{eq:LLG-num} is supplemented with
natural boundary conditions on $\partial U \times \RR_+$
(which, given the functional in~\eqref{eq:functional-num},
read $\partial_{\tmmathbf{n}} \m =  - \kappa \, \m \times \tmmathbf{n}$)
and the initial condition $\m (0) = \m^0$ in $U$.

For the numerical approximation of~\eqref{eq:LLG-num},
we propose a tangent plane scheme
in the spirit of~\cite{alouges2008,multiscale2014,abertetal2014,hrkac2017convergent}.
The basic ingredients of the method are:
\begin{itemize}
\item for the time discretization,
a uniform subdivision of $\RR_+$ with constant time-step size $\tau>0$
i.e., $t_i := i \tau$ for all $i \in \NN_0$;
\item for the spatial discretization,
a quasi-uniform family 
$( \mathcal{T}_h )_{h>0}$ of regular simplicial meshes of $U$
parametrized by the mesh size $h>0$.
\end{itemize}
We denote by $\mathcal{N}_h$ the set of vertices of $\mathcal{T}_h$.
For each simplex $K \in \mathcal{T}_h$ (a triangle if $d=2$, a tetrahedron if $d=3$),
we denote by $\mathcal{P}^1(K)$ the space of first-order polynomials on $K$.
We consider the space
of piecewise affine and globally continuous functions from $U$ to $\RR$,
i.e.,
\begin{equation*}
\mathcal{S}^1(\mathcal{T}_h)
= \left\{v_h \in C^0(\overline{U}): v_h \vert_K \in \mathcal{P}^1(K) \text{ for all } K \in \mathcal{T}_h \right\},
\end{equation*}
and we denote by
$\mathcal{I}_h : C^0(\overline{U}) \to \mathcal{S}^1(\mathcal{T}_h)$
the associated nodal interpolant, i.e.,
\begin{equation*}
\mathcal{I}_h [v](z) = v(z)
\quad \text{for all }
v \in C^0(\overline{U})
\text{ and all }
z \in \mathcal{N}_h.
\end{equation*}
To mimic the intrinsic orthogonality property of the LLG equation (i.e., $\m \cdot \partial_t \m = 0$)
at the discrete level, for all $\ppsi_h \in \mathcal{S}^1(\mathcal{T}_h)^3$,
we consider the \emph{discrete tangent space} of $\ppsi_h$, defined by
\begin{equation} \label{eq:discreteTangentSpace}
\mathcal{K}_h(\ppsi_h)
:= \left\{\pphi_h \in \mathcal{S}^1(\mathcal{T}_h)^3 : \ppsi_h(z) \cdot \pphi_h(z) = 0 \text{ for all } z \in \mathcal{N}_h \right\},
\end{equation}
where the orthogonality is enforced only at the vertices of the mesh.

The starting point of the method is the equivalent reformulation of~\eqref{eq:LLG-num}
in the form
\begin{equation} \label{eq:LLG-alternative}
\alpha \, \partial_t \m
+ \m \times \partial_t \m
= \heff [ \m ] - ( \heff [ \m ] \cdot \m ) \m,
\end{equation}
which is linear in the unknown $\v := \partial_t\m$.
For each $i \in \NN_0$, given an approximation $\m_h^i \approx \m(t_i)$,
the idea is
to compute $\v_h^i \approx \v(t_i) = \partial_t\m(t_i)$
by means of a weak formulation of~\eqref{eq:LLG-alternative}
posed on the discrete tangent space $\mathcal{K}_h(\m_h^i)$,
and then
to use $\v_h^i$ to update $\m_h^i$ to $\m_h^{i+1}$ via a first-order time-stepping.
In the following algorithm, we state the proposed method for the numerical approximation
of~\eqref{eq:LLG-num}.

\begin{algorithm} \label{alg:tps}
Input:
$\m_h^0 \in \mathcal{S}^1(\mathcal{T}_h)^3$ satisfying $\vert \m_h^0(z)\vert = 1$ for all $z \in \mathcal{N}_h$. \\
Loop:
For all $i \in \NN_0$, iterate:
\begin{itemize}
\item[\rm(i)]  Compute $\v_h^i \in \mathcal{K}_h(\m_h^i)$ such that,
for all $\pphi_h \in \mathcal{K}_h(\m_h^i)$, it holds that
\begin{equation} \label{eq:tps}
\begin{split}
&\qquad\qquad \alpha \int_U \mathcal{I}_h[\v_h^i \cdot \pphi_h]
+ \int_U \mathcal{I}_h[(\m_h^i \times \v_h^i) \cdot \pphi_h]
+ \tau \int_U \nabla \v_h^i : \nabla\pphi_h
\\
&
\qquad\qquad\qquad\qquad\qquad\qquad\qquad\qquad\qquad+ \frac{\kappa \tau}{2} \int_U \curl\v_h^i \cdot \pphi_h
+ \frac{\kappa \tau}{2} \int_U \v_h^i \cdot \curl\pphi_h
\\
& \quad\qquad = - \int_U \nabla\m_h^i : \nabla\pphi_h
+ \int_U \pi[\m_h^i] \cdot \pphi_h
+ \int_U \f \cdot \pphi_h
\\
&\qquad\qquad\qquad\qquad\qquad\qquad\qquad\qquad\qquad
- \kappa \int_U \curl\m_h^i \cdot \pphi_h
- \kappa \int_U \m_h^i \cdot \curl\pphi_h.
\end{split}
\end{equation}
\item[\rm(ii)] Define $\m_h^{i+1} \in \mathcal{S}^1(\mathcal{T}_h)^3$
by
\begin{equation} \label{eq:tps2}
\m_h^{i+1} := \m_h^i + \tau \v_h^i.
\end{equation}
\end{itemize}
Output:
Sequences of approximations
$(\m_h^i)_{i \in \NN_0}$
and $(\v_h^i)_{i \in \NN_0}$.
\end{algorithm}

Let us state the main properties of Algorithm~\ref{alg:tps}.
The variational formulation~\eqref{eq:tps} is a Galerkin approximation of~\eqref{eq:LLG-alternative},
where the last term in~\eqref{eq:LLG-alternative}, parallel to $\m$,
does not appear, because~\eqref{eq:tps} is posed in the discrete tangent space of $\m_h^i$.
The time discretization adopts an implicit-explicit approach,
where the effective field contributions involving spatial derivatives
are treated implicitly
(the exchange contribution by the implicit Euler method,
the DMI contribution by the Crank--Nicolson method),
while the lower-order contributions collected in the operator $\pi[\cdot]$ and in $\f$ are treated explicitly.
The presence of the nodal interpolant in the first two integrals on the left-hand side of~\eqref{eq:tps}
means that these are computed using the trapezoidal rule
(so-called \emph{mass lumping}).
Finally, we observe that,
if the time-step size $\tau$ is sufficiently small,
the bilinear form on the left-hand side of~\eqref{eq:tps} is elliptic in $H^1(U,\RR^3)$.
Hence, each step of Algorithm~\ref{alg:tps} is well-defined.

\begin{remark}
The evaluation of the quantity $\pi[\m_h^i]$ can involve the solution of a PDE
(which is the case, e.g., when $\pi[\m] = \hd [\m \chi_{\Omega}]$),
for which an approximation method is also required in general.
Here, for ease of presentation, we neglect this aspect 
and refer, e.g., to~\cite{multiscale2014} for an analysis which accounts for the inexact evaluation of $\pi[\cdot]$
(in terms of an approximate operator $\pi_h[\cdot]$).
\end{remark}

Weak solutions to the LLG equation satisfy the unit-length constraint $\vert \m \vert = 1$
and a dissipative energy law;
see, e.g., the energy inequality~\eqref{eq:energyinequality}.
In the following proposition,
we establish discrete counterparts of these properties satisfied by the
iterates of Algorithm~\ref{alg:tps}.

\begin{proposition} \label{prop:numerics}
For all $i \in \NN_0$,
the iterates of Algorithm~\ref{alg:tps} satisfy the discrete energy law 
\begin{equation} \label{eq:energy_decay}
\mathcal{G}_U(\m_h^{i+1})
+ \alpha \tau \int_U \mathcal{I}_h\big[\lvert \v_h^i \rvert^2\big]
+ \frac{\tau^2}{2} \int_U \lvert \nabla \v_h^i \rvert^2
+ \frac{\tau^2}{2} \int_U \pi[\v_h^i] \cdot \v_h^i
= \mathcal{G}_U(\m_h^i).
\end{equation}
In particular, it holds that $\mathcal{G}_U(\m_h^{i+1}) \leqslant \mathcal{G}_U(\m_h^i)$.
Moreover, for all $N \in \NN$, it holds that
\begin{equation} \label{eq:constraint_error}
\Vert \mathcal{I}_h\big[\vert \m_h^N\vert^2\big] - 1 \Vert_{L^1(U)}
\leqslant C \tau^2 \sum_{i = 0}^{N-1}\int_U \mathcal{I}_h\big[\lvert \v_h^i\rvert^2\big],
\end{equation}
where the constant $C>0$ depends only on the shape-regularity of the family of meshes.
\end{proposition}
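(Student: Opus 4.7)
The plan is to follow the standard tangent-plane-scheme analysis in two steps: (i) derive the energy identity by testing \eqref{eq:tps} with $\pphi_h = \v_h^i$, which is admissible because $\v_h^i \in \mathcal{K}_h(\m_h^i)$; (ii) handle the normalization error by exploiting the nodal orthogonality alone.

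For the energy law \eqref{eq:energy_decay}, I would test \eqref{eq:tps} with $\pphi_h = \v_h^i$. The gyroscopic term vanishes because $(\m_h^i \times \v_h^i)(z) \cdot \v_h^i(z) = 0$ at every node $z\in\mathcal{N}_h$, and hence also under mass lumping. The two symmetric DMI terms on the left-hand side collapse to $\kappa\tau\int_U \curl\v_h^i \cdot \v_h^i$. Rearranging yields an explicit expression for the quantity
\[
R_i \assign \int_U \nabla\m_h^i : \nabla\v_h^i + \kappa\int_U\curl\m_h^i\cdot\v_h^i + \kappa\int_U\curl\v_h^i\cdot\m_h^i - \int_U \pi[\m_h^i]\cdot\v_h^i - \int_U \f\cdot\v_h^i
\]
in terms of $-\alpha\int_U\mathcal{I}_h[|\v_h^i|^2] - \tau\int_U|\nabla\v_h^i|^2 - \kappa\tau\int_U\curl\v_h^i\cdot\v_h^i$.

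In parallel, using \eqref{eq:tps2}, the bilinearity of each of the three quadratic contributions to $\mathcal{G}_U$ (together with the self-adjointness of $\pi$) and the linearity of the Zeeman term give
\[
\mathcal{G}_U(\m_h^{i+1}) - \mathcal{G}_U(\m_h^i) = \tau R_i + \frac{\tau^2}{2}\int_U|\nabla\v_h^i|^2 + \kappa\tau^2\int_U\curl\v_h^i\cdot\v_h^i - \frac{\tau^2}{2}\int_U\pi[\v_h^i]\cdot\v_h^i.
\]
Substituting the expression for $R_i$ obtained in the previous step, the terms $\kappa\tau^2\int_U\curl\v_h^i\cdot\v_h^i$ cancel exactly and the exchange contribution $\tau^2\int_U|\nabla\v_h^i|^2$ is halved, producing precisely \eqref{eq:energy_decay}. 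For the monotonicity corollary, the first two dissipative terms on the left-hand side are nonnegative (using $\alpha>0$ and the $L^2$-norm equivalence $\int_U\mathcal{I}_h[|\v_h|^2] \simeq \|\v_h\|_{L^2}^2$ on $\mathcal{S}^1(\mathcal{T}_h)^3$); the third is the only delicate one, since $\pi$ need not be positive. Its boundedness on $L^2$ lets it be absorbed into the $\alpha\tau\int_U\mathcal{I}_h[|\v_h^i|^2]$ term whenever $\tau$ is small enough, which is the same regime already required for well-posedness of \eqref{eq:tps}.

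For the constraint error \eqref{eq:constraint_error}, I would work purely at the nodes. By definition of $\mathcal{K}_h(\m_h^i)$, we have $\v_h^i(z)\cdot\m_h^i(z) = 0$ for every $z\in\mathcal{N}_h$, so \eqref{eq:tps2} gives the pointwise Pythagorean identity
\[
|\m_h^{i+1}(z)|^2 = |\m_h^i(z)|^2 + \tau^2|\v_h^i(z)|^2 \quad \text{for every } z\in\mathcal{N}_h.
\]
Since $|\m_h^0(z)|=1$ by assumption, telescoping over $i=0,\dots,N-1$ yields $|\m_h^N(z)|^2 - 1 = \tau^2\sum_{i=0}^{N-1}|\v_h^i(z)|^2 \geq 0$ at every node. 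The functions $\mathcal{I}_h[|\m_h^N|^2] - 1$ and $\tau^2\sum_{i=0}^{N-1}\mathcal{I}_h[|\v_h^i|^2]$ are both piecewise affine on $\mathcal{T}_h$ and agree at all nodes, hence coincide on $\overline{U}$ and are pointwise nonnegative. Integrating gives \eqref{eq:constraint_error}; the dependence of $C$ on shape-regularity only enters if one prefers to express the right-hand side via the (equivalent) norm $\|\v_h^i\|_{L^2}^2$ instead of $\int_U\mathcal{I}_h[|\v_h^i|^2]$.

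The only nontrivial step is the energy identity, where the precise cancellation of the $\kappa\tau^2$ DMI contribution (and the halving of the exchange contribution) hinges on writing the DMI part of \eqref{eq:tps} in the symmetric form adopted in the algorithm; this is also the reason why monotonicity passes through explicit evaluation of $\pi[\m_h^i]$ at the price of a smallness condition on $\tau$. Everything else is either direct algebra or an immediate consequence of the nodal tangent-plane condition.
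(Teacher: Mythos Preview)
Your proof follows the paper's approach: test \eqref{eq:tps} with $\pphi_h=\v_h^i$ and expand $\mathcal{G}_U(\m_h^{i+1})$ via \eqref{eq:tps2} to obtain \eqref{eq:energy_decay}; then derive the nodal Pythagorean identity $|\m_h^{i+1}(z)|^2=|\m_h^i(z)|^2+\tau^2|\v_h^i(z)|^2$ and telescope for \eqref{eq:constraint_error}. Two small deviations are worth flagging. For the monotonicity corollary, the paper simply asserts that the last three terms on the left of \eqref{eq:energy_decay} are nonnegative and concludes directly, whereas your absorption of the $\pi$-term into the damping term introduces a smallness condition on $\tau$ that is not part of the stated proposition; strictly speaking you therefore prove slightly less than claimed. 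For \eqref{eq:constraint_error}, your observation that the two piecewise-affine interpolants agree at every node and are nonnegative in fact gives the bound as an \emph{equality} with $C=1$, while the paper reaches the inequality by invoking norm-equivalence lemmas from \cite{bartels2015}.
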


\begin{proof}
Let $i \in \NN_0$.
Choosing the test function $\pphi_h = \v_h^i \in \mathcal{K}_h(\m_h^i)$ in~\eqref{eq:tps},
we obtain the identity
\begin{align} \label{eq:tps_aux}
& \alpha \int_U \mathcal{I}_h\big[\lvert \v_h^i\rvert^2\big]
+ \tau  \int_U \lvert \nabla \v_h^i \rvert^2
+ \kappa \tau \int_U \curl\v_h^i \cdot \v_h^i
\\
\notag
& \quad = - \int_U \nabla\m_h^i : \nabla\v_h^i
- \kappa \int_U \curl\m_h^i \cdot \v_h^i
- \kappa \int_U \m_h^i \cdot \curl\v_h^i
+ \int_U \pi[\m_h^i] \cdot \v_h^i
+ \int_U f \cdot \v_h^i.
\end{align}
Exploiting the fact that $\pi[\cdot]$ is self-adjoint.
It follows that
\begin{equation*}
\begin{split}
\mathcal{G}_U(\m_h^{i+1})
\stackrel{\eqref{eq:functional-num}}{=}& 
\frac{1}{2} \int_U \vert \nabla \m_h^{i+1} \vert^2
+ \kappa \int_U \curl\m_h^{i+1} \cdot \m_h^{i+1}
- \frac{1}{2} \int_U \pi[\m_h^{i+1}] \cdot \m_h^{i+1}
- \int_U \f \cdot \m_h^{i+1} \\
\stackrel{\eqref{eq:tps2}}{=}& 
\mathcal{G}_U(\m_h^i)
+ \tau  \int_U \nabla\m_h^i : \nabla\v_h^i
+ \frac{\tau^2}{2} \int_U \lvert \nabla \v_h^i \rvert^2 \\
& 
+ \kappa \tau \int_U \curl\m_h^i \cdot \v_h^i
+ \kappa \tau \int_U \curl\v_h^i \cdot \m_h^i 
+ \kappa \tau^2 \int_U \curl\v_h^i \cdot \v_h^i \\
& 
- \frac{\tau}{2} \int_U \pi[\m_h^i] \cdot \v_h^i
- \frac{\tau}{2} \int_U \pi[\v_h^i] \cdot \m_h^i
- \frac{\tau^2}{2} \int_U \pi[\v_h^i] \cdot \v_h^i
- \tau \int_U \f \cdot \v_h^i
\\
\stackrel{\eqref{eq:tps_aux}}{=}& 
\mathcal{G}_U(\m_h^i)
- \alpha \tau \int_U \mathcal{I}_h\big[\lvert \v_h^i\rvert^2\big]
- \frac{\tau^2}{2} \int_U \lvert \nabla \v_h^i \rvert^2
- \frac{\tau^2}{2} \int_U \pi[\v_h^i] \cdot \v_h^i.
\end{split}
\end{equation*}
This yields~\eqref{eq:energy_decay}.
Moreover, observing that the last three terms on the left-hand side of~\eqref{eq:energy_decay} are nonnegative,
we also conclude that $\mathcal{G}_U(\m_h^{i+1}) \leqslant \mathcal{G}_U(\m_h^i)$.

Let $z \in \mathcal{N}_h$.
Since $ \v_h^i \in \mathcal{K}_h(\m_h^i)$, from~\eqref{eq:tps2}, we deduce that
$\lvert \m_h^{i+1} (z) \rvert^2 = \lvert \m_h^i (z) \rvert^2 + \tau^2 \lvert \v_h^i (z) \rvert^2$.
Let $N \in \NN$.
We infer that
\begin{equation*}
\lvert \m_h^N (z) \rvert^2 - 1
=
\lvert \m_h^N (z) \rvert^2 - \lvert \m_h^0 (z) \rvert^2
=
\sum_{i = 0}^{N-1} \left( \lvert \m_h^{i+1} (z) \rvert^2 - \lvert \m_h^i (z) \rvert^2 \right)
=
\tau^2 \sum_{i = 0}^{N-1} \lvert \v_h^i (z) \rvert^2.
\end{equation*}
With this identity,
using the norm equivalences from~\cite[Lemma~3.4 and~3.9]{bartels2015},
we obtain~\eqref{eq:constraint_error}.
\end{proof}

Proposition~\ref{prop:numerics} shows that Algorithm~\ref{alg:tps} respects the dissipative dynamics
of the LLG equation,
where the intrinsic energy dissipation
(modulated by the damping parameter $\alpha$,
cf.\ the second term on the left-hand side of~\eqref{eq:energy_decay})
is augmented by some artificial dissipation associated with the
implicit-explicit treatment of the exchange contribution and the $\pi$-contribution
to the effective field;
cf.\ the nonnegative third and the fourth term on the left-hand side of~\eqref{eq:energy_decay}.
Unlike that and unlike the integrator from~\cite{hrkac2017convergent}, the DMI contribution,
now treated by the symplectic Crank--Nicolson method,
does not generate numerical dissipation
(which would not have a definite sign, as the DMI energy does not have one).
Moreover, although the scheme does not enforce the unit-length
constraint on the approximate magnetization (not even at the vertices of the mesh),
the violation can be controlled by the time-step size;
cf.\ \eqref{eq:constraint_error}.

Algorithm~\ref{alg:tps} yields a sequence of discrete functions $(\m_h^i)_{i \in \NN_0}$,
which can be used to define the piecewise affine time reconstruction $ \m_{h\tau}$ defined by
\begin{equation*}
\m_{h\tau}(t) := \frac{t-t_i}{\tau}\m_h^{i+1} + \frac{t_{i+1} - t}{\tau}\m_h^i
\quad
\text{for all } i \in \NN \text{ and } t \in [t_i,t_{i+1}).
\end{equation*}
The following theorem establishes the convergence of the sequence $(\m_{h\tau})_{h,\tau>0}$ of time reconstructions
towards a weak solution of the LLG equation as $h,\tau \to 0$.

\begin{theorem} \label{thm:numerics}
Let the approximate initial condition satisfy the convergence property
\begin{equation*}
\m_h^0 \to \m^0 \quad \text{in } H^1(U,\RR^3) \quad \text{as } h \to 0.
\end{equation*}
Then, there exist a weak solution $\m$ of the LLG equation
and a subsequence of $(\m_{h\tau})_{h,\tau>0}$
which,
for all $T>0$,
converges weakly und unconditionally towards $\m$
in $H^1((0,T) \times U,\RR^3)$
as $h,\tau \to 0$.
\end{theorem}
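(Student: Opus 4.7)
My plan is to follow the standard strategy for convergence of tangent plane schemes, adapted to the DMI setting, and split it into four stages: a priori bounds, extraction and compactness, identification of the weak LLG equation, and verification of the energy inequality.

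First, I would iterate the discrete energy law~\eqref{eq:energy_decay}. Combined with the identity~\eqref{eq:helicalder} (which shows that $\mathcal{G}_U + \kappa^2|U|$ controls $\tfrac12\sum_i\|\held_i \m\|^2$, and hence $\tfrac14\|\nabla\m\|_{L^2}^2$ modulo an $|U|$-term) and the assumed $H^1$-convergence of $\m_h^0$, this yields, uniformly in $h,\tau$ and $N$, the bounds
\begin{equation*}
\sup_{i}\|\m_h^i\|_{H^1(U)} + \alpha\tau\sum_{i=0}^{N-1}\int_U\mathcal{I}_h[|\v_h^i|^2] + \tau^2\sum_{i=0}^{N-1}\|\nabla\v_h^i\|_{L^2(U)}^2 \leqslant C.
\end{equation*}
Standard norm equivalences for piecewise affine finite element functions (as in~\cite{bartels2015}) then give the corresponding bounds in true $L^2$ norms. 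I would introduce the piecewise-constant-in-time interpolants $\m_{h\tau}^\pm$ and $\v_{h\tau}$ (with $\v_{h\tau}(t)=\v_h^i$ on $[t_i,t_{i+1})$) alongside the piecewise affine $\m_{h\tau}$, note that $\partial_t\m_{h\tau}=\v_{h\tau}$, and conclude uniform bounds of $\m_{h\tau}$ in $L^\infty(0,T;H^1(U,\RR^3))\cap H^1((0,T)\times U,\RR^3)$.

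Second, by Banach--Alaoglu and the Aubin--Lions lemma, up to a subsequence there exists $\m\in H^1((0,T)\times U,\RR^3)$ with $\m_{h\tau}\rightharpoonup\m$ weakly in $H^1((0,T)\times U,\RR^3)$ and strongly in $L^2((0,T)\times U,\RR^3)$; moreover $\m_{h\tau}^\pm\to\m$ strongly in $L^2$ because $\|\m_{h\tau}-\m_{h\tau}^\pm\|_{L^2}\leqslant\tau\|\v_{h\tau}\|_{L^2}=O(\tau^{1/2})$. The saturation constraint $|\m|=1$ a.e.\ is then recovered by combining the nodal estimate~\eqref{eq:constraint_error} with the $L^2$-bound on $\tau^{1/2}\v_h^i$, which forces $\|\mathcal{I}_h[|\m_h^N|^2]-1\|_{L^1(U)}=O(\tau)$, and with the strong $L^2$ convergence of $\m_{h\tau}^\pm$. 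The initial condition $\m(0)=\m^0$ follows from the trace of $\m_{h\tau}$ at $t=0$ and the assumed convergence of $\m_h^0$.

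Third, to pass to the limit in~\eqref{eq:tps}, I would test against the \emph{discrete tangential projection} of a smooth test field: given $\pphi\in C^\infty_c([0,T)\times\overline U,\RR^3)$, set $\pphi_{h\tau}^i := \mathcal{I}_h[\m_h^i\times(\pphi(t_i)\times \m_h^i)]\in\mathcal{K}_h(\m_h^i)$. Standard interpolation estimates plus the strong $L^2$-convergence and a.e.\ unit length of the limit show that $\pphi_{h\tau}\to\pphi\times\m\times\m=-\pphi+(\pphi\cdot\m)\m$ strongly in $L^2(0,T;H^1(U,\RR^3))$. One then multiplies~\eqref{eq:tps} by $\tau$, sums over $i=0,\dots,N-1$, and passes to the limit term-by-term: the linear exchange and DMI contributions pass by weak convergence of $\nabla\v_{h\tau}$, $\nabla\m_{h\tau}^-$; the $\tau$-Crank--Nicolson corrections vanish by the $\tau^2$-bound on $\nabla\v_{h\tau}$; the boundedness and self-adjointness of $\pi[\cdot]$ together with strong $L^2$-convergence handle the $\pi$-term (mimicking~\cite[Lemma~2.1]{carbou2001thin} in the 3D case); and the mass-lumped terms converge to the true $L^2$ products $\int(\alpha\partial_t\m+\m\times\partial_t\m)\cdot\pphi$ using the uniform norm equivalence between $\mathcal{I}_h[(\cdot)]$ and $(\cdot)$ on finite element functions. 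Using the pointwise orthogonality $\v_h^i\cdot\m_h^i=0$ at the nodes to drop the parallel component $(\pphi\cdot\m)\m$, one recovers exactly~\eqref{eq:LLGweakform} with effective field $\heff[\m]=-d\mathcal{G}_U[\m]$. Finally, the energy inequality~\eqref{eq:energyinequality} follows by summing~\eqref{eq:energy_decay} from $i=0$ to $N-1$, dropping the nonnegative artificial-dissipation terms, and taking the liminf using weak lower semicontinuity of $\mathcal{G}_U$ together with the strong convergence $\mathcal{G}_U(\m_h^0)\to\mathcal{G}_U(\m^0)$.

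The main obstacle I anticipate is the identification of the nonlinear cross-product term $\int_U\mathcal{I}_h[(\m_h^i\times\v_h^i)\cdot\pphi_h]$: the factor $\v_{h\tau}$ only converges weakly in $L^2$, so strong compactness of $\m_{h\tau}^-$ via Aubin--Lions is essential, and the presence of the nodal interpolant forces one to quantify the mass-lumping error $|\int_U(\mathcal{I}_h-\mathrm{Id})[(\m_h^i\times\v_h^i)\cdot\pphi_h]|\lesssim h^2\|\v_{h\tau}\|_{L^2}\|\pphi\|_{W^{1,\infty}}$, which vanishes unconditionally as $h\to 0$. Together with the tangential-projection construction of $\pphi_{h\tau}$, which is what allows arbitrary $H^1$ test functions despite the pointwise constraint~\eqref{eq:discreteTangentSpace}, this is the technical core of the argument.
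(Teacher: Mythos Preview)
Your proposal is correct and follows essentially the same approach as the paper. In fact, the paper itself does not give a detailed proof: it only outlines the compactness strategy (energy estimate $\Rightarrow$ uniform bounds in $L^\infty(\RR_+;H^1)\cap H^1$ $\Rightarrow$ extraction of a weakly convergent subsequence $\Rightarrow$ passage to the limit in~\eqref{eq:tps}, \eqref{eq:energy_decay}, and~\eqref{eq:constraint_error}) and refers to~\cite{alouges2008,multiscale2014,abertetal2014,hrkac2017convergent} for the details. Your four-stage breakdown reproduces exactly this plan, with more detail than the paper supplies.

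One small caveat: your claim that $\pphi_{h\tau}^i=\mathcal{I}_h[\m_h^i\times(\pphi(t_i)\times\m_h^i)]$ converges \emph{strongly in $L^2(0,T;H^1)$} is too optimistic, since $\nabla\m_h^i$ is only weakly convergent and appears in $\nabla\pphi_{h\tau}^i$. In the cited references this is not needed; instead one expands each bilinear pairing (e.g.\ $\int_U\nabla\m_h^i:\nabla\pphi_{h\tau}^i$) explicitly and passes to the limit term by term using the weak convergence of $\nabla\m_{h\tau}^-$, $\nabla\v_{h\tau}$ against the strongly $L^2$-convergent factors. Also note the sign: $\m\times(\pphi\times\m)=\pphi-(\pphi\cdot\m)\m$ for $|\m|=1$, not $-\pphi+(\pphi\cdot\m)\m$. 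Neither point affects the overall strategy.
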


The proof of Theorem~\ref{thm:numerics} is constructive and is based on a standard compactness argument:
Starting from the energy estimate~\eqref{eq:energy_decay}, one can show that the sequence $(\m_{h\tau})_{h,\tau>0}$
is uniformly bounded in $L^{\infty}(\RR_+;H^1(U,\RR^3)) \cap H^1(U \times (0,T),\RR^3)$ (for all $T>0$),
which allows extracting a weakly convergent subsequence.
Its limit is then identified with a weak solution of the LLG equation by passing to the limit as $h,\tau \to 0$
in the discrete variational formulation~\eqref{eq:tps}, in the energy law~\eqref{eq:energy_decay},
as well as in~\eqref{eq:constraint_error}.
We omit the details as the argument follows the ideas in~\cite{alouges2008,multiscale2014,abertetal2014,hrkac2017convergent}.
Note that a byproduct of the constructive proof is an existence result for weak solutions of~\eqref{eq:LLG-num}.

\section{Numerical results}
\label{sec:sim}

In this section,
we aim to highlight the practical implications of our results
and show the effectivity of the proposed algorithm
by means of two numerical experiments.
The computations presented in this section have been performed
with the open-source micromagnetic software Commics~\cite{prsehhsmp2020}.

For both experiments,
the computational domain is a helimagnetic nanodisk of thickness \SI{9}{\nano\meter}
(aligned with the $x_3$-direction)
and variable diameter $d>0$ (aligned with the $x_1 x_2$-plane),
i.e.,
$\Omega = \omega \times (0,9)$,
with $\omega = \{ x \in \RR^2 : \lvert x \rvert < d/2 \}$
being a circle with diameter $d$.
We consider the energy functional in physical units (with values measured in~\si{\joule})
\begin{equation} \label{eq:energy_physical}
\mathcal{E}(\m)
=
A \int_\Omega \lvert \nabla \m \rvert^2
+
D \int_{\Omega} \curl  \m \cdot \m
-
\frac{\mu_0 M_s^2}{2} \int_{\Omega}
 \hd [ \m \chi_{\Omega} ] \cdot \m.
\end{equation}
Here, $A>0$ is the exchange stiffness constant
(in \si{\joule\per\meter}),
$D \in \RR$ is the DMI constant (in~\si{\joule\per\square\meter}),
while $M_s>0$ is the saturation magnetization (in~\si{\ampere\per\meter}).
In our experiments,
we use the material parameters of iron-germanium (FeGe), i.e.,
$A=$ \SI{8.78e-12}{\joule\per\meter}, $D=$ \SI{1.58e-3}{\joule\per\square\meter}, and $M_s=$ \SI{3.84e5}{\ampere\per\meter};
see, e.g., \cite{bcwcvbachsf2015,babcwcvhcsmf2017}.
The resulting exchange length is $\ell_{\mathrm{ex}} = \sqrt{2A / (\mu_0 M_s^2)} \approx$ \SI{9.73}{\nano\meter}.
Note that the energy functional~\eqref{eq:energy_physical} can be rewritten in nondimensional form as~\eqref{eq:GLunormfull}
after a suitable rescaling of the spatial variable and the involved material parameters
($x' = x / \ell_{\mathrm{ex}}$, $\kappa = D / (\mu_0 M_s^2 \ell_{\mathrm{ex}})$,
$\mathcal{G}_\Omega(\m)=\mathcal{E}(\m) / (\mu_0 M_s^2 \ell_{\mathrm{ex}}^3)$).

\subsection{Comparison of the models in the stationary case} \label{sec:exp1}

In our first numerical experiment, which is inspired by~\cite{bcwcvbachsf2015},
we investigate the diameter influence on the equilibrium magnetization configurations
obtained by relaxing a uniform out-of-plane ferromagnetic state.
Performing this study,
we compare the results obtained with the full 3D model
and the reduced 2D model.

We consider the disk diameters
$d=$ \num{80}, \num{90}, \num{100}, \num{120}, \num{140}, \num{160}, \num{180}, \SI{200}{\nano\meter}.
For the
time discretization in Algorithm~\ref{alg:tps},
we use a constant time-step size of \SI{1e-11}{\pico\second}.
For the spatial discretization of each nanodisk---a cylinder in 3D (resp., a circle in 2D)---we
consider a tetrahedral (resp., triangular) mesh
with mesh size of about $h_{3D}=$ \SI{5.25}{\nano\meter} in 3D
(resp., about $h_{2D}=$ \SI{4.45}{\nano\meter});
see Table~\ref{tab:exp1_results} below for the precise values.
These values, which are well below the exchange length of the material, are chosen so that
the surface mesh of the top face of the 3D nanodisk
has approximately the same mesh size as the mesh of the 2D nanodisk.

For each considered value of the disk diameter,
we start from the uniform out-of-plane initial condition
$\m^0 \equiv (0,0,1)$
and
let the LLG dynamics evolve the system towards its equilibrium.
Since we are not interested in the precise magnetization dynamics,
to speed up the simulations,
we choose the large value $\alpha=$ \num{1} for the Gilbert damping constant in the LLG equation.
We simulate
for \SI{1}{\nano\second},
which experimentally turns out to be a sufficiently large time to reach
the stable state for all diameters.

\begin{figure}[t]
\includegraphics[width=16.5cm]{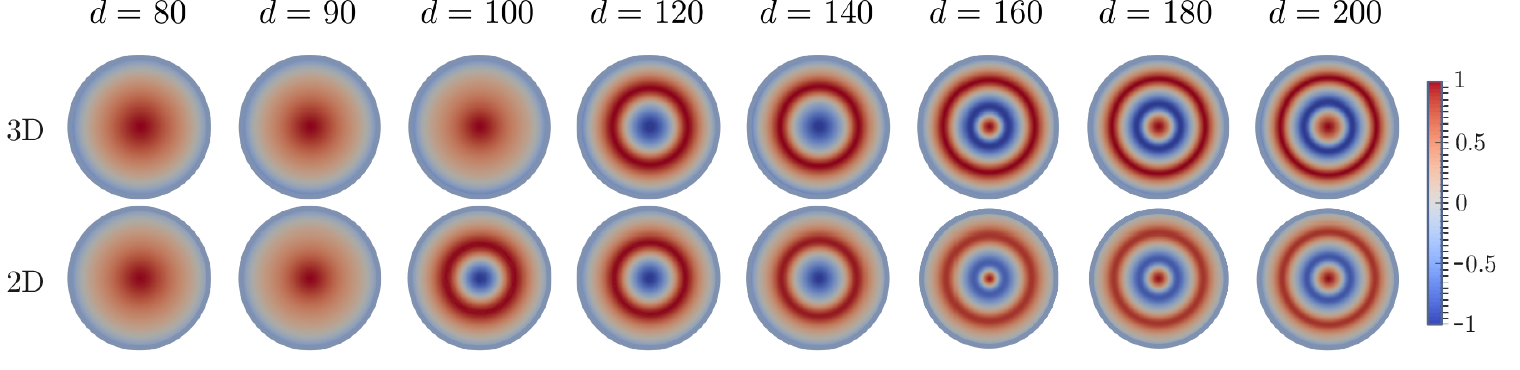}
\caption{Experiment of Section~\ref{sec:exp1}:
Out-of-plane magnetization component $m_3$ of the equilibrium state
for different values of the disk diameter $d$ (in~\si{\nano\meter})
computed with the
full 3D model (top) and
reduced 2D model (bottom).
}
\label{fig:experiment1_stable}
\end{figure}

In Figure~\ref{fig:experiment1_stable}, we plot
the out-of-plane magnetization component $m_3$ of the equilibrium state
for all considered values of the disk diameter
and both the full 3D model
and the reduced 2D model.

\begin{figure}[t]
\centering
\includegraphics[width=16.4cm]{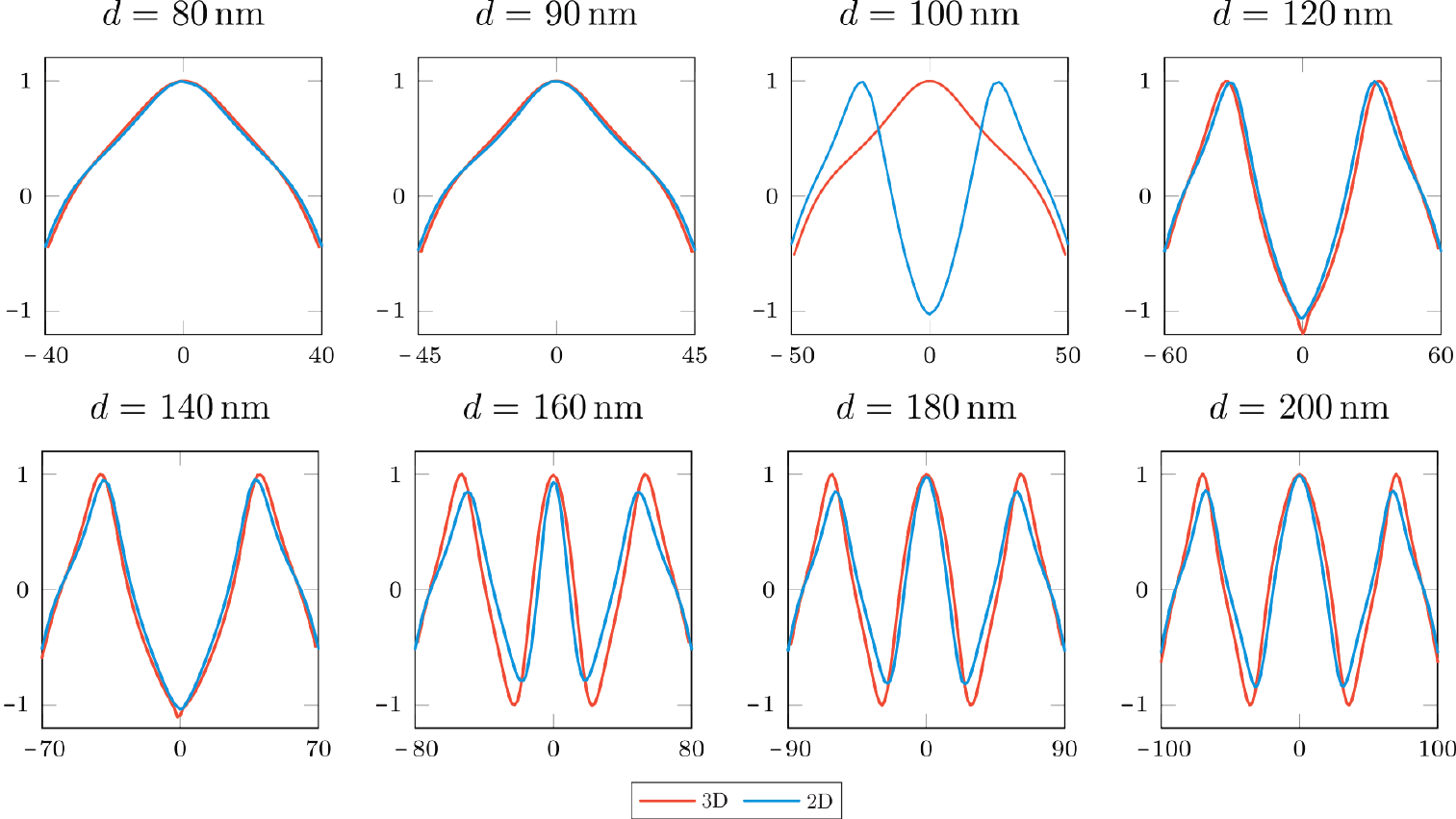}
\caption{Experiment of Section~\ref{sec:exp1}:
Out-of-plane magnetization component $m_3$ along one horizontal symmetry line
for different values of the disk diameter.
Comparison of the results obtained with the full 3D model and the reduced 2D model.}
\label{fig:experiment1_profiles}
\end{figure}

In Figure~\ref{fig:experiment1_profiles},
for all considered values of the disk diameter,
we plot $m_3$ along one horizontal symmetry line
for both the full 3D model and the reduced 2D model.
For the 3D case,
we consider the line between the points $(-d/2,0,0)$ and $(d/2,0,0)$.
For the 2D case,
we consider the line between the points $(-d/2,0)$ and $(d/2,0)$.

Figures~\ref{fig:experiment1_stable}--\ref{fig:experiment1_profiles}
show the influence of the diameter of the nanodisk on the final equilibrium state.
All equilibrium configurations are radially symmetric.
For the smallest values of $d$ considered in the experiment,
the out-of-plane magnetization component $m_3$
along one horizontal symmetry line does not cover the entire interval $[-1,1]$.
Following the terminology of~\cite{bcwcvbachsf2015},
we call this state an \emph{incomplete skyrmion}.
Increasing the diameter,
the magnetization relaxes to a different stable state,
in which $m_3$ covers the entire interval $[-1,1]$ at least once.
We refer to this state as an \emph{isolated skyrmion}.
The transition between the two states occurs for a diameter between
\num{100} and \SI{120}{\nano\meter} for the full 3D model,
while it occurs for a diameter below \SI{100}{\nano\meter}
for the reduced 2D model.
For both models, for the cases $d=$ \num{160}, \num{180}, \SI{200}{\nano\meter},
$m_3$ covers the entire interval $[-1,1]$ at least twice.
We refer to this state as a \emph{target skyrmion}.
Apart from the discrepancy in the threshold diameter between incomplete and isolated skyrmions,
the results delivered by the two models are in good agreement with each other.

\begin{table}[ht]
\begin{center}
\begin{tabular}{|c||c|c|c|c|c|c|c|c|}
\hline
$d$ [\si{\nano\meter}] &
\num{80} &
\num{90} &
\num{100} &
\num{120} &
\num{140} &
\num{160} &
\num{180} &
\num{200} \\
\hline
\hline
$h_{3D}$ [\si{\nano\meter}] &
\num{5.27} &
\num{5.23} &
\num{5.23} &
\num{5.20} &
\num{5.25} &
\num{5.29} &
\num{5.33} &
\num{5.38} \\
\hline
$h_{2D}$ [\si{\nano\meter}] &
\num{4.41} &
\num{4.45} &
\num{4.45} &
\num{4.46} &
\num{4.42} &
\num{4.41} &
\num{4.41} &
\num{4.45} \\
\hline
$\#\mathcal{T}_{h,3D}$ &
\num{31084} &
\num{39895} &
\num{51340} &
\num{74491} &
\num{95166} &
\num{134150} &
\num{164068} &
\num{214528} \\
\hline
$\#\mathcal{T}_{h,2D}$ &
\num{1354} &
\num{1830} &
\num{2142} &
\num{3706} &
\num{5062} &
\num{7062} &
\num{7924} &
\num{10120} \\
\hline
error &
\num{0.0383} &
\num{0.0368} &
\num{0.2200} &
\num{0.0854} &
\num{0.0792} &
\num{0.0992} &
\num{0.0953} &
\num{0.0906} \\
\hline
\end{tabular}
\end{center}
\caption{Experiment of Section~\ref{sec:exp1}:
Mesh size and number of elements of the meshes,
and relative energy error~\eqref{eq:energy_error}
between the 3D and 2D computations.}
\label{tab:exp1_results} 
\end{table}

In Table~\ref{fig:experiment1_stable},
for all considered values of the disk diameter,
we collect
the mesh sizes of the 3D and 2D meshes,
the corresponding number of elements,
and the relative error
\begin{equation} \label{eq:energy_error}
\mathrm{error} = \frac{\lvert \mathcal{E}(\m_{3D}) - \mathcal{E}(\m_{2D}) \rvert}{\lvert \mathcal{E}(\m_{3D}) \rvert}.
\end{equation}
Here, $\m_{3D}$ denotes the equilibrium state computed with the full 3D model,
while $\m_{2D}$ denotes the 3D extension (homogeneous in $x_3$) of the equilibrium state
computed with the reduced 2D model.
For each value of $d$, comparing the number of elements of the 3D and 2D meshes,
we have a quantitative measurement of the reduction of the computational complexity
guaranteed by the reduced model.
Note that, besides the significantly smaller number of degrees of freedom,
a strong advantage of the reduced 2D model is that all energy contributions are local,
while the full 3D model 
requires the solution
of a nonlocal problem in each time-step
to compute the magnetostatic field.
This aspect widens the gap between the computational complexities of the two approaches further.
Looking at the values of the relative error in the approximation of the energy,
we see that the error,
apart from the case $d =$ \SI{100}{\nano\meter} (where the stable states obtained by the two models are different),
always stays below 10\%.

\subsection{Comparison of the models in the evolutionary case} \label{sec:exp2}

In our second experiment,
we address the effectivity of the reduced 2D model for the evolutionary problem.
Moreover,
we try to identify the source of the quantitative discrepancy between the models
observed in the first experiment.
We restrict ourselves to the disk of diameter \SI{140}{\nano\meter}
and repeat the experiment of Section~\ref{sec:exp1}.
However, to have a slower and more realistic magnetization dynamics,
we now use the experimental value $\alpha=$ \num{0.28}
for the Gilbert damping parameter~\cite{babcwcvhcsmf2017}.
In this case, the equilibrium magnetization configuration is an isolated skyrmion
(see Figure~\ref{fig:exp2_pictures}).

\begin{figure}[ht]
\includegraphics[height=4cm]{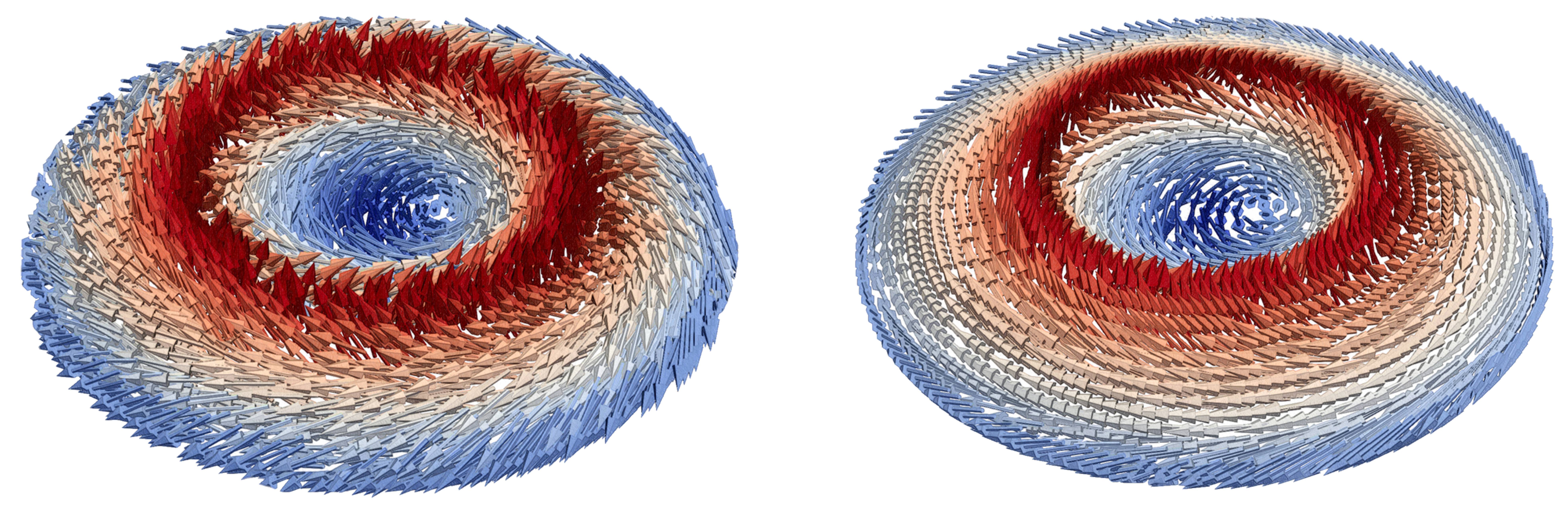}
\caption{Experiment of Section~\ref{sec:exp2}:
3D view of the isolated skyrmion
computed with the full 3D model (left) and the reduced 2D model (right).
}
\label{fig:exp2_pictures}
\end{figure}

\begin{figure}[t]
\centering
\includegraphics[width=10cm]{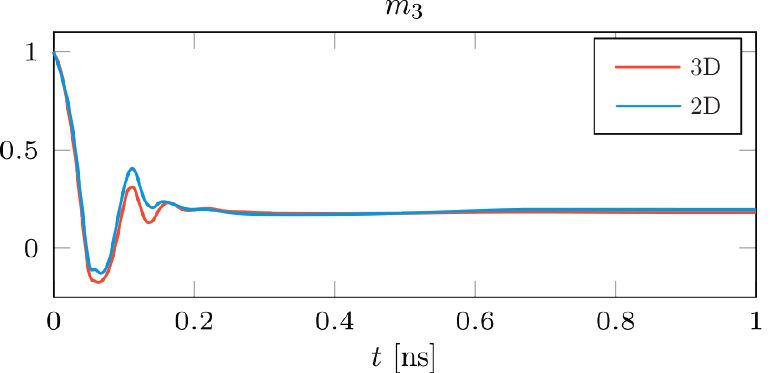}
\caption{Experiment of Section~\ref{sec:exp2}:
Time evolution of $\langle m_3 \rangle$.
Comparison of the results obtained with the full 3D model and the reduced 2D model.}
\label{fig:exp2_average}
\end{figure}
\begin{figure}[t]
\centering
\centering
\includegraphics[width=16.4cm]{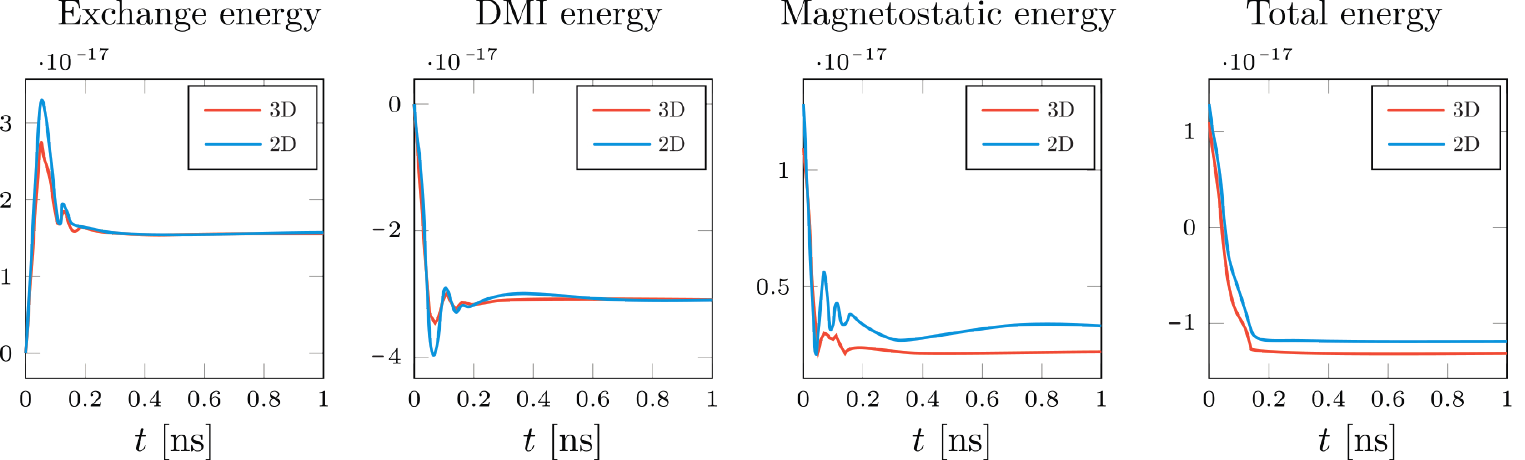}
\caption{Experiment of Section~\ref{sec:exp2}:
Time evolution of the energy contributions (exchange, DMI, magnetostatic)
and the total energy (all measured in \si{\joule}).
Comparison of the results obtained with the full 3D model and the reduced 2D model.}
\label{fig:exp2_energies}
\end{figure}

In Figure~\ref{fig:exp2_average}
and in Figure~\ref{fig:exp2_energies},
for both the full 3D model and the reduced 2D model,
we plot the time evolution of the spatial average of
the out-of-plane magnetization component $m_3$,
i.e.,
$\langle m_3 \rangle = \lvert\Omega\rvert^{-1} \int_{\Omega} m_3$,
of the total energy,
and of each energy contribution separately.
We observe a clear quantitative agreement between the two models,
which highlights the effectivity of the proposed approach also for the dynamic problem.
Looking at the evolution of the different energy contributions,
we see that
the discrepancy in the total energy between the models is mainly attributable
to the magnetostatic contribution.
Finally, the plot of the time evolution of the total energy~\eqref{eq:energy_physical},
which decays monotonically,
provides a numerical validation of the dissipative energy law
guaranteed by Algorithm~\ref{alg:tps}
(cf.\ Proposition~\ref{prop:numerics}).

The aim of the experiments included in the present paper
was to
numerically validate the thin-film limits
established in Sections~\ref{subsec:equicoercivityffprime}--\ref{sec:LLG}
and to
show that the reduced 2D model, to some extent, provides
a computationally attractive tool to qualitatively study the physics 
of magnetic thin films for materials with bulk DMI.
Future numerical studies,
out of the scope of the present work,
will investigate the difference between the models more quantitatively.

\section*{Acknowledgments}
The authors thank Carl-Martin Pfeiler (TU Wien) for his help with the Commics code~\cite{prsehhsmp2020}.
All authors acknowledge the support of the Austrian Science Fund (FWF) through the special research program
\emph{Taming complexity in partial differential systems} (grant F65).
All authors also acknowledge support from
the Erwin Schr\"odinger International Institute for Mathematics and Physics (ESI) in Vienna,
given on the occasion of the workshop on \emph{New Trends in the Variational Modeling and Simulation of Liquid Crystals} held at ESI on December 2--6, 2019.
The research of ED has been supported by the FWF through the grants V 662-N32
\emph{High contrast materials in plasticity and magnetoelasticity}
and I 4052 N32 \emph{Large Strain Challenges in Materials Science},
and from BMBWF through the OeAD-WTZ project CZ04/2019
\emph{Mathematical Frontiers in Large Strain Continuum Mechanics}.
GDF and MR would like to thank the Isaac Newton Institute for Mathematical Sciences for support and hospitality
during the program \emph{The mathematical design of new materials}
(supported by EPSRC grant number EP/R014604/1),
when work on this paper was undertaken.

\bibliographystyle{siam} 
\bibliography{literature}

\begin{thebibliography}{10}

\bibitem{aesds2013}
{\sc C.~Abert, L.~Exl, G.~Selke, A.~Drews, and T.~Schrefl}, {\em Numerical
  methods for the stray-field calculation: {A} comparison of recently developed
  algorithms}, Journal of Magnetism and Magnetic Materials, 326 (2013),
  pp.~176--185.

\bibitem{abertetal2014}
{\sc C.~Abert, G.~Hrkac, M.~Page, D.~Praetorius, M.~Ruggeri, and D.~Suess},
  {\em Spin-polarized transport in ferromagnetic multilayers: an
  unconditionally convergent {FEM} integrator}, Computers \& Mathematics with
  Applications. An International Journal, 68 (2014), pp.~639--654.

\bibitem{AcerbiA2006}
{\sc E.~Acerbi, I.~Fonseca, and G.~Mingione}, {\em {Existence and regularity
  for mixtures of micromagnetic materials}}, Proceedings of the Royal Society
  A: Mathematical, Physical and Engineering Sciences, 462 (2006),
  pp.~2225--2243.

\bibitem{alouges2008}
{\sc F.~Alouges}, {\em A new finite element scheme for {L}andau--{L}ifshitz
  equations}, Discrete and Continuous Dynamical Systems. Series S, 1 (2008),
  pp.~187--196.

\bibitem{alouges2015homogenization}
{\sc F.~Alouges and G.~{Di Fratta}}, {\em {Homogenization of composite
  ferromagnetic materials}}, Proceedings of the Royal Society A: Mathematical,
  Physical and Engineering Science, 471 (2015), p.~20150365.

\bibitem{akst2014}
{\sc F.~Alouges, E.~Kritsikis, J.~Steiner, and J.-C. Toussaint}, {\em A
  convergent and precise finite element scheme for
  {L}andau-{L}ifschitz-{G}ilbert equation}, Numerische Mathematik, 128 (2014),
  pp.~407--430.

\bibitem{alouges1992global}
{\sc F.~Alouges and A.~Soyeur}, {\em On global weak solutions for
  {L}andau--{L}ifshitz equations: {E}xistence and nonuniqueness}, Nonlinear
  Analysis, 18 (1992), pp.~1071--1084.

\bibitem{Back_2020}
{\sc C.~Back, V.~Cros, H.~Ebert, K.~Everschor-Sitte, A.~Fert, M.~Garst, T.~Ma,
  S.~Mankovsky, T.~L. Monchesky, M.~Mostovoy, N.~Nagaosa, S.~S.~P. Parkin,
  C.~Pfleiderer, N.~Reyren, A.~Rosch, Y.~Taguchi, Y.~Tokura, K.~von Bergmann,
  and J.~Zang}, {\em The 2020 skyrmionics roadmap}, Journal of Physics D:
  Applied Physics, 53 (2020), p.~363001.

\bibitem{bartels2015}
{\sc S.~Bartels}, {\em Numerical methods for nonlinear partial differential
  equations}, vol.~47, Springer, 2015.

\bibitem{bp2006}
{\sc S.~Bartels and A.~Prohl}, {\em Convergence of an implicit finite element
  method for the {L}andau-{L}ifshitz-{G}ilbert equation}, SIAM Journal on
  Numerical Analysis, 44 (2006), pp.~1405--1419.

\bibitem{babcwcvhcsmf2017}
{\sc M.~Beg, M.~Albert, M.-A. Bisotti, D.~Cort{\'e}s-Ortu{\~n}o, W.~Wang,
  R.~Carey, M.~Vousden, O.~Hovorka, C.~Ciccarelli, C.~S. Spencer, C.~H.
  Marrows, and H.~Fangohr}, {\em Dynamics of skyrmionic states in confined
  helimagnetic nanostructures}, Physical Review B, 95 (2017), p.~014433.

\bibitem{bcwcvbachsf2015}
{\sc M.~Beg, R.~Carey, W.~Wang, D.~Cort{\'e}s-Ortu{\~n}o, M.~Vousden, M.-A.
  Bisotti, M.~Albert, D.~Chernyshenko, O.~Hovorka, R.~L. Stamps, and
  H.~Fangohr}, {\em Ground state search, hysteretic behaviour, and reversal
  mechanism of skyrmionic textures in confined helimagnetic nanostructures},
  Scientific Reports, 5 (2015), p.~17137.

\bibitem{braides1998homogenization}
{\sc A.~Braides and A.~Defranceschi}, {\em Homogenization of {M}ultiple
  {I}ntegrals}, Oxford University Press, Oxford, 1998.

\bibitem{bresciani}
{\sc M.~Bresciani}, {\em Linearized von {K}\'arm\'an theory for incompressible
  magnetoelastic plates}, Preprint arXiv:2007.14122,  (2020).

\bibitem{BrownB1962}
{\sc W.~F. Brown}, {\em {Magnetostatic principles in ferromagnetism}},
  North-Holland Publishing Company, New York, 1962.

\bibitem{BrownB1963}
{\sc W.~F. Brown}, {\em {Micromagnetics}}, Interscience Publishers, London,
  1963.

\bibitem{multiscale2014}
{\sc F.~Bruckner, D.~Suess, M.~Feischl, T.~F\"{u}hrer, P.~Goldenits, M.~Page,
  D.~Praetorius, and M.~Ruggeri}, {\em Multiscale modeling in micromagnetics:
  existence of solutions and numerical integration}, Mathematical Models and
  Methods in Applied Sciences, 24 (2014), pp.~2627--2662.

\bibitem{CanteroOtto06}
{\sc R.~Cantero-\'{A}lvarez and F.~Otto}, {\em Critical fields in ferromagnetic
  thin films: identification of four regimes}, Journal of Nonlinear Science, 16
  (2006), pp.~351--383.

\bibitem{CanteroOttoSteiner07}
{\sc R.~Cantero-\'{A}lvarez, F.~Otto, and J.~Steiner}, {\em The concertina
  pattern: a bifurcation in ferromagnetic thin films}, Journal of Nonlinear
  Science, 17 (2007), pp.~221--281.

\bibitem{CapellaMelcherOtto07}
{\sc A.~Capella, C.~Melcher, and F.~Otto}, {\em Wave-type dynamics in
  ferromagnetic thin films and the motion of {N}\'{e}el walls}, Nonlinearity,
  20 (2007), pp.~2519--2537.

\bibitem{carbou2001thin}
{\sc G.~Carbou}, {\em {Thin layers in micromagnetism}}, Mathematical Models and
  Methods in Applied Sciences, 11 (2001), pp.~1529--1546.

\bibitem{CoteIgnatMiot14}
{\sc R.~C\^{o}te, R.~Ignat, and E.~Miot}, {\em A thin-film limit in the
  {L}andau-{L}ifshitz-{G}ilbert equation relevant for the formation of
  {N}\'{e}el walls}, Journal of Fixed Point Theory and Applications, 15 (2014),
  pp.~241--272.

\bibitem{dal1993introduction}
{\sc G.~{Dal Maso}}, {\em {Introduction to $\Gamma$-convergence}}, Birkhäuser
  Basel, 1993.

\bibitem{Davoli_2020}
{\sc E.~Davoli and G.~{Di Fratta}}, {\em Homogenization of chiral magnetic
  materials: A mathematical evidence of {D}zyaloshinskii's predictions on
  helical structures}, Journal of Nonlinear Science, 30 (2020), pp.~1229--1262.

\bibitem{dkps}
{\sc E.~Davoli, M.~Kruzík, P.~Piovano, and U.~Stefanelli}, {\em Magnetoelastic
  thin films at large strains}, Continuum Mechanics and Thermodynamics,
  (2020).

\bibitem{DeSimoneKohnMuellerOtto02}
{\sc A.~DeSimone, R.~V. Kohn, S.~M\"{u}ller, and F.~Otto}, {\em A reduced
  theory for thin-film micromagnetics}, Communications on Pure and Applied
  Mathematics, 55 (2002), pp.~1408--1460.

\bibitem{DeSimoneETal01}
{\sc A.~DeSimone, R.~V. Kohn, S.~M\"{u}ller, F.~Otto, and R.~Sch\"{a}fer}, {\em
  Two-dimensional modelling of soft ferromagnetic films}, The Royal Society of
  London. Proceedings. Series A. Mathematical, Physical and Engineering
  Sciences, 457 (2001), pp.~2983--2991.

\bibitem{Di_Fratta_2020}
{\sc G.~{Di~Fratta}}, {\em Micromagnetics of curved thin films}, Zeitschrift
  für angewandte Mathematik und Physik, 71 (2020).

\bibitem{Di_Fratta_2019b}
{\sc G.~{Di Fratta}, M.~Innerberger, and D.~Praetorius}, {\em
  Weak{\textendash}strong uniqueness for the
  {L}andau{\textendash}{L}ifshitz{\textendash}{G}ilbert equation in
  micromagnetics}, Nonlinear Analysis: Real World Applications, 55 (2020),
  p.~103122.

\bibitem{Di_Fratta_2019}
{\sc G.~{Di Fratta}, C.~B. Muratov, F.~N. Rybakov, and V.~V. Slastikov}, {\em
  Variational principles of micromagnetics revisited}, SIAM Journal on
  Mathematical Analysis, 52 (2020), pp.~3580--3599.

\bibitem{gao2014}
{\sc H.~Gao}, {\em Optimal error estimates of a linearized backward {E}uler
  {FEM} for the {L}andau-{L}ifshitz equation}, SIAM Journal on Numerical
  Analysis, 52 (2014), pp.~2574--2593.

\bibitem{carlos2007}
{\sc C.~J. Garc\'{\i}a-Cervera}, {\em Numerical micromagnetics: a review},
  Bolet\'{\i}n de la Sociedad Espa\~{n}ola de Matem\'{a}tica Aplicada. SeMA,
  (2007), pp.~103--135.

\bibitem{EGarcaCervera2001EffectiveDF}
{\sc C.~J. Garc{\'i}a-Cervera and W.~E}, {\em Effective dynamics for
  ferromagnetic thin films}, Journal of Applied Physics, 90 (2001),
  pp.~370--374.

\bibitem{gilbert2004phenomenological}
{\sc T.~L. Gilbert}, {\em A phenomenological theory of damping in ferromagnetic
  materials}, {IEEE} Transactions on Magnetics, 40 (2004), pp.~3443--3449.

\bibitem{GioiaJames97}
{\sc G.~Gioia and R.~D. James}, {\em Micromagnetics of very thin films},
  Proceedings of the Royal Society of London. Series A: Mathematical, Physical
  and Engineering Sciences, 453 (1997), pp.~213--223.

\bibitem{HadijiShirakawa10}
{\sc R.~Hadiji and K.~Shirakawa}, {\em 3{D}-2{D} asymptotic observation for
  minimization problems associated with degenerate energy-coefficients},
  Discrete and Continuous Dynamical Systems. Series A,  (2011), pp.~624--633.

\bibitem{hrkac2017convergent}
{\sc G.~Hrkac, C.-M. Pfeiler, D.~Praetorius, M.~Ruggeri, A.~Segatti, and
  B.~Stiftner}, {\em Convergent tangent plane integrators for the simulation of
  chiral magnetic skyrmion dynamics}, Advances in Computational Mathematics, 45
  (2019), pp.~1329--1368.

\bibitem{hubert2008magnetic}
{\sc A.~Hubert and R.~Sch{\"a}fer}, {\em Magnetic domains: the analysis of
  magnetic microstructures}, Springer Science \& Business Media, 2008.

\bibitem{Ignat09}
{\sc R.~Ignat}, {\em A survey of some new results in ferromagnetic thin films},
  in S\'{e}minaire: \'{E}quations aux {D}\'{e}riv\'{e}es {P}artielles,
  \'{E}cole Polytechnique, Palaiseau, 2009, pp.~Exp. No. VI, 21.

\bibitem{IgnatOtto07}
{\sc R.~Ignat and F.~Otto}, {\em A compactness result in thin-film
  micromagnetics and the optimality of the {N}\'{e}el wall}, Journal of the
  European Mathematical Society (JEMS), 10 (2008), pp.~909--956.

\bibitem{IgnatOtto11}
\leavevmode\vrule height 2pt depth -1.6pt width 23pt, {\em A compactness result
  for {L}andau state in thin-film micromagnetics}, Annales de l'Institut Henri
  Poincar\'{e}. Analyse Non Lin\'{e}aire, 28 (2011), pp.~247--282.

\bibitem{KnuepferMuratovNolte19}
{\sc H.~Kn\"{u}pfer, C.~B. Muratov, and F.~Nolte}, {\em Magnetic domains in
  thin ferromagnetic films with strong perpendicular anisotropy}, Archive for
  Rational Mechanics and Analysis, 232 (2019), pp.~727--761.

\bibitem{KohnSlastikov05}
{\sc R.~V. Kohn and V.~V. Slastikov}, {\em Another thin-film limit of
  micromagnetics}, Archive for Rational Mechanics and Analysis, 178 (2005),
  pp.~227--245.

\bibitem{KohnSlastikov-dyn05}
\leavevmode\vrule height 2pt depth -1.6pt width 23pt, {\em Effective dynamics
  for ferromagnetic thin films: a rigorous justification}, Proceedings of The
  Royal Society of London. Series A. Mathematical, Physical and Engineering
  Sciences, 461 (2005), pp.~143--154.

\bibitem{KruzikProhl06}
{\sc M.~Kruzík and A.~Prohl}, {\em Recent developments in the modeling,
  analysis, and numerics of ferromagnetism}, SIAM Review, 48 (2006),
  pp.~439--483.

\bibitem{Kurzke06}
{\sc M.~Kurzke}, {\em Boundary vortices in thin magnetic films}, Calculus of
  Variations and Partial Differential Equations, 26 (2006), pp.~1--28.

\bibitem{KurzkeMelcherMoser06}
{\sc M.~Kurzke, C.~Melcher, and R.~Moser}, {\em Domain walls and vortices in
  thin ferromagnetic films}, in Analysis, modeling and simulation of multiscale
  problems, Springer, Berlin, 2006, pp.~249--298.

\bibitem{KurzkeMelcherMoserSpirn11}
{\sc M.~Kurzke, C.~Melcher, R.~Moser, and D.~Spirn}, {\em Ginzburg-{L}andau
  vortices driven by the {L}andau-{L}ifshitz-{G}ilbert equation}, Archive for
  Rational Mechanics and Analysis, 199 (2011), pp.~843--888.

\bibitem{LandauA1935}
{\sc L.~Landau and E.~Lifshitz}, {\em On the theory of the dispersion of
  magnetic permeability in ferromagnetic bodies}, in Perspectives in
  Theoretical Physics, Elsevier, 1992, pp.~51--65.

\bibitem{LundMuratov16}
{\sc R.~G. Lund and C.~B. Muratov}, {\em One-dimensional domain walls in thin
  ferromagnetic films with fourfold anisotropy}, Nonlinearity, 29 (2016),
  pp.~1716--1734.

\bibitem{LundMuratovSlastikov20}
{\sc R.~G. Lund, C.~B. Muratov, and V.~V. Slastikov}, {\em Edge domain walls in
  ultrathin exchange-biased films}, Journal of Nonlinear Science, 30 (2020),
  pp.~1165--1205.

\bibitem{Melcher-reg07}
{\sc C.~Melcher}, {\em A dual approach to regularity in thin film
  micromagnetics}, Calculus of Variations and Partial Differential Equations,
  29 (2007), pp.~85--98.

\bibitem{Melcher10}
\leavevmode\vrule height 2pt depth -1.6pt width 23pt, {\em Thin-film limits for
  {L}andau-{L}ifshitz-{G}ilbert equations}, SIAM Journal on Mathematical
  Analysis, 42 (2010), pp.~519--537.

\bibitem{Melcher14}
\leavevmode\vrule height 2pt depth -1.6pt width 23pt, {\em Chiral skyrmions in
  the plane}, Proceedings of The Royal Society of London. Series A.
  Mathematical, Physical and Engineering Sciences, 470 (2014), p.~20140394.

\bibitem{Melcher_2019}
{\sc C.~Melcher and Z.~N. Sakellaris}, {\em Curvature-stabilized skyrmions with
  angular momentum}, Letters in Mathematical Physics, 109 (2019),
  pp.~2291--2304.

\bibitem{MoriniSlastikov18}
{\sc M.~Morini and V.~Slastikov}, {\em Reduced models for ferromagnetic thin
  films with periodic surface roughness}, Journal of Nonlinear Science, 28
  (2018), pp.~513--542.

\bibitem{Moser04}
{\sc R.~Moser}, {\em Boundary vortices for thin ferromagnetic films}, Archive
  for Rational Mechanics and Analysis, 174 (2004), pp.~267--300.

\bibitem{Moser05}
\leavevmode\vrule height 2pt depth -1.6pt width 23pt, {\em Moving boundary
  vortices for a thin-film limit in micromagnetics}, Communications on Pure and
  Applied Mathematics, 58 (2005), pp.~701--721.

\bibitem{Muratov19}
{\sc C.~B. Muratov}, {\em A universal thin film model for {G}inzburg-{L}andau
  energy with dipolar interaction}, Calculus of Variations and Partial
  Differential Equations, 58 (2019), p.~Paper No. 52.

\bibitem{prsehhsmp2020}
{\sc C.-M. Pfeiler, M.~Ruggeri, B.~Stiftner, L.~Exl, M.~Hochsteger, G.~Hrkac,
  J.~Sch\"oberl, N.~J. Mauser, and D.~Praetorius}, {\em Computational
  micromagnetics with {C}ommics}, Computer Physics Communications, 248 (2020),
  p.~106965.

\bibitem{praetorius2004analysis}
{\sc D.~Praetorius}, {\em Analysis of the operator {$\Delta^{-1}\textrm{div}$}
  arising in magnetic models}, Zeitschrift für Analysis und ihre Anwendungen,
  23 (2004), pp.~589--605.

\bibitem{prohl2001}
{\sc A.~Prohl}, {\em Computational micromagnetism}, Advances in Numerical
  Mathematics, B. G. Teubner, Stuttgart, 2001.

\bibitem{schroers1995bogomol}
{\sc B.~J. Schroers}, {\em Bogomol'nyi solitons in a gauged {$O(3)$} sigma
  model}, Physics Letters B, 356 (1995), pp.~291--296.

\bibitem{Slastikov05}
{\sc V.~Slastikov}, {\em Micromagnetics of thin shells}, Mathematical Models
  and Methods in Applied Sciences, 15 (2005), pp.~1469--1487.

\bibitem{Streubel_2016}
{\sc R.~Streubel, P.~Fischer, F.~Kronast, V.~P. Kravchuk, D.~D. Sheka,
  Y.~Gaididei, O.~G. Schmidt, and D.~Makarov}, {\em Magnetism in curved
  geometries}, Journal of Physics D: Applied Physics, 49 (2016), p.~363001.

\bibitem{gaussseidel2020}
{\sc C.~Xie, C.~J. Garc\'{\i}a-Cervera, C.~Wang, Z.~Zhou, and J.~Chen}, {\em
  Second-order semi-implicit projection methods for micromagnetics
  simulations}, Journal of Computational Physics, 404 (2020), p.~109104.

\end{thebibliography}
\end{document}